\newtheorem{thm}{Theorem}[section]
\newtheorem{lem}[thm]{Lemma}
\newtheorem{cor}[thm]{Corollary}
\newtheorem{prop}[thm]{Proposition}
\theoremstyle{definition}
\newtheorem{rem}[thm]{Remark}
\newtheorem{defn}[thm]{Definition}
\newcommand{\cU}{{\mathcal U}}
\newcommand{\cA}{{\mathcal A}}
\newcommand{\cC}{{\mathcal C}}
\newcommand{\cJC}{{\mathcal {JC}}}
\newcommand{\cY}{{\mathcal Y}}
\newcommand{\lra}{\longrightarrow}
\title[On the dimension of Voisin sets]{On the dimension of Voisin sets in the moduli space of abelian varieties}
\author{E. Colombo} 
\address{Elisabetta Colombo \\ Universit\`a degli Studi di Pavia  \\ Dipartimento di Matematica \\ Via Cesare Saldini 50  \\   20133 Milano, Italy }
\email{elisabetta.colombo@unimi.it}
\author{J.C. Naranjo}
 \address{Juan Carlos Naranjo  \\Universitat de Barcelona  \\ Departament de Matem\`a\-ti\-ques i Inform\`atica \\ Gran Via 585  \\ 08007 Barcelona, Spain  }
 \email{jcnaranjo@ub.edu}
 \author{G.P. Pirola}
 \address{Gian Pietro Pirola  \\ Universit\`a degli Studi di Pavia  \\ Dipartimento di Matematica \\ Via Ferrata 1  \\ 27100 Pavia, Italy  }
 \email{gianpietro.pirola@unipv.it}
 \thanks{
 E. Colombo and G.P. Pirola are members of Gnsaga (INDAM) and are partially supported by PRIN projectModuli spaces and Lie theory (2017), G.P. Pirola is partially supported by MIUR: Dipartimenti di Eccellenza Program (2018-2022) - Dept. of Math. Univ. of Pavia.
  Naranjo was partially supported by the Proyecto de Investigaci\'on MTM2015-65361-P and PID2019-104047GB-100}
\begin{document}

\begin{abstract}{We study the subsets $V_k(A)$ of a complex abelian variety $A$ consisting in the collection of points $x\in A$  such that the zero-cycle $\{x\}-\{0_A\}$ is $k$-nilpotent with respect to the Pontryagin product in the Chow group.  These sets were introduced recently by Voisin and she showed that $\dim V_k(A) \leq k-1$ and $V_k(A)$ is countable for a very general abelian variety of dimension at least $2k-1$.

We study in particular the locus $\mathcal V_{g,2}$ in the moduli space of abelian varieties of dimension $g$  with a fixed polarization, where $V_2(A)$ is positive dimensional. We prove that an irreducible subvariety $\mathcal Y \subset \mathcal V_{g,2}$, $g\ge 3$, such that for a very general $y \in \mathcal Y $ there is a curve in $V_2(A_y)$ generating $A$ satisfies $\dim \mathcal Y \le 2g - 1.$ The hyperelliptic locus shows that this bound is sharp.

MSC codes: 14K10, 14C15.}
\end{abstract}

\maketitle

\begin{flushright}
\textit{Dedicated to the memory of our friend Alberto Collino}
\end{flushright}

\section{Introduction}

Claire Voisin in \cite{Vo}  defines the subset $V_k(A)$ of a complex abelian variety $A$ consisting in the collection of points $x\in A$  such that the zero-cycle $\{x\}-\{0_A\}$ is $k$-nilpotent with respect to the Pontryagin product in the Chow group:
\[
 V_k(A):=\{x\in A \mid (\{x\}-\{0_A\})^{*k}=0 \text{ in } CH_0(A)_{\mathbb Q}\}. 
\]
Here we have  denoted by $\{x\}$ the zero-cycle of degree $1$  corresponding to the point $x\in A$. These are naturally defined sets in the sense that they exist in all the abelian varieties, are functorial and move in families. Moreover they are related with the gonality of the abelian variety itself (the minimal gonality of a curve contained in $A$) in a natural way.

We consider the following subsets of the moduli space of abelian varieties of dimension $g$ with a polarization of type $\delta$:
\[
 \mathcal V_{g,k,l}=\{ A \in \mathcal A_g^\delta \mid \dim V_k(A) \ge l \}.
\]
Since the sets $V_k$ are naturally defined, then $\mathcal V_{g,k,l}$ is a union of countably many closed subvarieties of $\mathcal A_g^\delta$. Hence it makes sense to ask about its dimension. 
Put $\mathcal V_{g,k}:=\mathcal V_{g,k,1}$. For an abelian subvariety $B\subset A$ the inclusion $V_k(B)\subset V_k(A)$ holds and a well-known theorem of Bloch implies that $B=V_k(B)$ if $\dim B+1\le k$, hence in this situation $B\subset V_k(A)$.  These are in some sense ``degenerated examples''. In this paper we concentrate in the case $k=2$ and we take care of the non-degenerate case, that is, we will assume that $V_2(A)$ contains some curve generating the abelian variety $A$.

Our main result is:
\vskip 3mm
\begin{thm}\label{m.res}
 Let $g\ge 3$ and consider an irreducible subvariety $\mathcal Y \subset \mathcal V_{g,2}$ such that for a very general  $y\in \mathcal Y$ there is a curve in $V_2(A_y)$ generating $A_y$. Then $\dim \mathcal Y\le 2g-1.$
\end{thm}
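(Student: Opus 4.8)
The plan is to reduce the statement to an infinitesimal one and then to a dimension count governed by the generating curve. Fix a very general $y_0\in\cY$, write $A=A_{y_0}$ and let $f\colon C\to A$ be a generating curve with $f(C)\subset V_2(A)$; set $W:=H^0(A,\Omega^1_A)$, so that the generating hypothesis makes $f^*\colon W\hookrightarrow H^0(C,\Omega^1_C)$ injective. After shrinking $\cY$ and passing to a smooth cover I obtain a family $\pi\colon\mathcal A\to S$ dominating $\cY$ together with a relative curve $\mathcal C\subset\mathcal A$ satisfying $\mathcal C_s\subset V_2(\mathcal A_s)$, and it suffices to bound $\dim T_{y_0}\cY$ inside $T_{[A]}\cA_g^\delta=\operatorname{Sym}^2 H^1(A,\cO_A)\cong\operatorname{Sym}^2 W^*$. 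Thus the whole problem becomes: which symmetric forms $\xi\in\operatorname{Sym}^2W^*$ are realised by first-order deformations of $A$ that keep $C$ inside $V_2$?

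I will first rewrite the membership condition using Beauville's decomposition $CH_0(A)_\mathbb Q=\bigoplus_s CH_0(A)_{(s)}$, on which $[2]_*$ acts by $2^s$ on the $s$-th summand. Expanding the Pontryagin square gives $(\{x\}-\{0\})^{*2}=\{2x\}-2\{x\}+\{0\}$, and writing $c(x):=\{x\}-\{0\}$ this equals $c(2x)-2c(x)=[2]_*c(x)-2c(x)$. Since $c(x)$ has no $(0)$-component, $x\in V_2(A)$ if and only if $\sum_{s\ge2}(2^s-2)\{x\}_{(s)}=0$, i.e.\ if and only if $\{x\}_{(s)}=0$ for every $s\ge2$. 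Hence $f(C)\subset V_2(A)$ is equivalent to the family of zero-cycles $x\mapsto\{x\}_{(2)}$ being identically zero along $C$ (together with vanishing of the higher components, which I expect not to be needed for the bound). The relevant first invariant of $CH_0(A)_{(2)}$ is Hodge-theoretic and valued in $(\wedge^2 W)^*=H^0(A,\Omega^2_A)^*$, and rational triviality forces it to vanish.

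The heart of the argument is a Mumford--Roitman type computation. The family $x\mapsto\{x\}_{(2)}$ lives over the one-dimensional base $C$, so a holomorphic two-form can only detect it once a second direction is supplied; that direction is exactly a first-order deformation $\xi\in T_{y_0}\cY$ of $A$. Differentiating the identity $\{x\}_{(2)}\equiv0$ simultaneously along $C$ and along $\xi$, and pairing with $\omega=\alpha\wedge\beta\in\wedge^2W$, I will use the vanishing of the infinitesimal invariant of a rationally trivial family of zero-cycles to produce, for each such $\omega$, a linear relation $B_\omega(\xi)=0$. The cycle is supported at $2x,x,0$ with weights $1,-2,1$, and $[2]^*$ acts as multiplication by $2$ on $W$; carrying these weights through the computation, I expect $B_\omega$ to factor through the multiplication map $\mu\colon\operatorname{Sym}^2W\xrightarrow{\operatorname{Sym}^2 f^*}\operatorname{Sym}^2H^0(C,\Omega^1_C)\to H^0(C,(\Omega^1_C)^{\otimes2})$, so that the relations say precisely that $\xi$, viewed as a functional on $\operatorname{Sym}^2W$, must annihilate $\ker\mu$ (or a canonically defined subspace refining it, produced by the doubling). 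Consequently $\dim T_{y_0}\cY\le\dim\operatorname{Im}(\mu\text{-type map})$.

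It then remains to bound this image by $2g-1$. The generating hypothesis guarantees that $f^*W$ is a $g$-dimensional, essentially base-point-free subsystem of $|K_C|$, and the weights $1,-2,1$ of the doubling cycle are what collapse the generic Torelli bound $3g-3$ to the even part of dimension $2g-1$; when $C$ is hyperelliptic the map $\mu$ lands in the $(2g-1)$-dimensional space of sections of twice the $g^1_2$, which is the source of sharpness. The main obstacle is exactly this step: identifying the relation $B_\omega(\xi)=0$ precisely, showing that it is governed by the multiplication/doubling map rather than something larger, and then proving the uniform rank bound $\le 2g-1$ over all admissible generating curves---including the delicate cases in which $f$ is not an embedding or $f(C)$ is singular, and verifying that the hypothesis $g\ge3$ is what rules out the low-genus degeneracies. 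Controlling the higher Beauville components $\{x\}_{(s)}$, $s\ge3$, should they enter, is a secondary technical point I would treat by the same infinitesimal method.
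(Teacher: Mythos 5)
Your first half matches the paper's strategy: the characterization of $V_2$ via the Beauville decomposition is exactly Proposition \ref{V_k_vs_Chow}, and your plan to spread the resulting rationally trivial zero-cycles over the curve and kill their action on $2$-forms by a Mumford--Roitman/Bloch--Srinivas argument is precisely Propositions \ref{Bloch-Srinivas} and \ref{vanishing_Delta}. One technical remark: the paper does not work with your cycle $\{2x\}-2\{x\}+\{0\}$ but with the symmetrized version $\{x\}+\{-x\}-2\{0_A\}$ (equivalent to $x\in V_2(A)$ by Proposition \ref{properties_V_2}(a)); this matters for the form computation, since $-1_{\mathcal A}$ acts trivially on $(2,0)$-forms and so $s_-^*\Omega=s_+^*\Omega$ gives cleanly $\mathcal Z_{\Delta}^*h^*(\Omega')=2f_{\Delta}^*(\Omega')$, whereas your weights $1,-2,1$ would force you to control the action of the relative multiplication-by-$2$ map on the non-invariant sections of $\Omega^2_{\cA_{\Delta}\vert A_y}$.

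The genuine gap is the second half, and it is not merely unfinished --- the shape of the argument is wrong. First, the cycle vanishing does \emph{not} produce linear relations $B_\omega(\xi)=0$ for every $\omega=\omega_1\wedge\omega_2\in\wedge^2 W$: a $2$-form on $A_y$ feeds into Proposition \ref{vanishing_Delta} only if it extends to the first-order family $\cA_{\Delta}$, and the obstruction to lifting $\omega_i$ is exactly the cup product $\xi\cdot\omega_i\in H^1(A_y,\mathcal O_{A_y})$. So the invariant you get is \emph{conditional} (defined only when $\xi$ kills the pencil $\langle\omega_1,\omega_2\rangle$): it is the secondary, Collino--Pirola adjoint form, not a pairing linear in $\xi$. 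Second, even granting your relations, the bound $\dim \mathbb T\le \operatorname{rank}\mu$ with $\operatorname{rank}\mu\le 2g-1$ fails: by Max Noether, for a non-hyperelliptic canonically embedded curve the multiplication map $Sym^2H^0(\omega_C)\to H^0(\omega_C^{\otimes 2})$ has rank $3g-3>2g-1$, and you cannot assume the generating curve is hyperelliptic without begging the question --- that expectation is the motivation for the theorem, not an input. The paper's actual mechanism is different: for a generic $2$-plane $E\subset W_y$ with complement $E_0$, the restriction map $\mathbb T\to Sym^2E^*+E^*\otimes E_0^*$ has target of dimension $3+2(g-2)=2g-1$, so $\dim\mathcal Y\ge 2g$ forces a nonzero $\xi$ with $\xi\cdot E=0$ (Lemma \ref{existence_xi}); one then lifts a basis of $E$ to sections $s_1,s_2$ vanishing on the zero section, restricts $s_1\wedge s_2\in H^0(A_y,\Omega^2_{\cA_{\Delta}\vert A_y})^0$ to $C_y$ to get the adjoint form, which vanishes by Proposition \ref{vanishing_Delta}; the Adjoint Theorem of \cite{collino_pirola} then upgrades this to the splitting of the extension and $\xi\cdot H^0(C_y,\omega_{C_y}(-D))=0$, hence $\xi\cdot W_y=0$ and $\xi\in\ker d\Phi$ (Proposition \ref{vanishing_implies_end}), contradicting generic finiteness of $\Phi$. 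In other words, $2g-1$ arises as the dimension of the obstruction space for lifting a pencil of $1$-forms along $\xi$, matching the dimension of the hyperelliptic locus; the numerical coincidence that $\operatorname{rank}\mu=2g-1$ for hyperelliptic curves is what misled your plan, and without the adjoint-form step your approach cannot close.
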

This result is sharp due to the fact that the hyperelliptic locus is contained in  $\mathcal V_{g,2}$, see section 2. In fact, the motivation for this study is to understand the geometrical meaning of the positive dimensional components in $V_2$. Our result gives some evidence that there is a link between the existence of hyperelliptic curves in abelian varieties and the fact that $V_2$ is positive dimensional.  We remark that the statement of the theorem (\ref{m.res})  was suggested by the analogous result in \cite{Isog_hyp} concerning hyperelliptic curves.

\vskip 3mm
Section $2$ is devoted to give some preliminaries  and some useful properties of the loci $V_k(A)$ focusing specially on the case $k=2$. A remarkable property is that $V_2(A)$ is the preimage  of the orbit of the image of the origin with respect to rational equivalence in the Kummer variety $Kum(A)$. We also prove the following interesting facts (see Corollaries (\ref{cor1}) and (\ref{cor2})):

\vskip 3mm
\begin{prop} For any abelian variety $A$ the inclusion $$V_k(A)+V_l(A)\subset V_{k+l-1}(A)$$ holds for all $\, 1\le k, l \le g$. Moreover if $C$ is a hyperelliptic curve of genus $g$, and $J(C)$ be its Jacobian variety, then for all $1\le k \le g+1$, we have that $\dim V_{k}(JC)=k-1$ (the maximal possible value).
\end{prop}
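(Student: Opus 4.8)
The plan is to treat the two assertions separately: the first is a formal consequence of the ring structure on $CH_0(A)_{\mathbb Q}$, while the second is a construction exhibiting a subvariety of $V_k(JC)$ of the expected dimension.

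For the inclusion $V_k(A)+V_l(A)\subset V_{k+l-1}(A)$ I would work in the commutative ring $(CH_0(A)_{\mathbb Q},*)$ with unit $\{0_A\}$, using that the class of a point is multiplicative, $\{x\}*\{y\}=\{x+y\}$. Setting $a=\{x\}-\{0_A\}$ and $b=\{y\}-\{0_A\}$, so that $\{x\}=\{0_A\}+a$ and $\{y\}=\{0_A\}+b$, one computes
\[
\{x+y\}-\{0_A\}=\{x\}*\{y\}-\{0_A\}=a+b+a*b .
\]
If $x\in V_k(A)$ and $y\in V_l(A)$ then $a^{*k}=0$ and $b^{*l}=0$, so $a,b$ are nilpotent. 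The element $a+b+a*b$ lies in the ideal $(a,b)$, and every monomial $a^{*i}*b^{*j}$ with $i+j\ge k+l-1$ vanishes, since $i\ge k$ or $j\ge l$ necessarily holds; hence $(a,b)^{k+l-1}=0$ and in particular $(\{x+y\}-\{0_A\})^{*(k+l-1)}=0$, i.e.\ $x+y\in V_{k+l-1}(A)$. Iterating the case $k=l=2$ gives the key consequence that the $m$-fold sum satisfies $\underbrace{V_2(A)+\dots+V_2(A)}_{m}\subset V_{m+1}(A)$.

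For the hyperelliptic Jacobian I would first produce a generating curve inside $V_2(JC)$ and then spread it out. Fix a Weierstrass point $w_0$ and the Abel--Jacobi embedding $\phi\colon C\to JC$, $p\mapsto[p-w_0]$; since $2w_0$ represents the hyperelliptic class, the involution satisfies $\phi(\iota p)=-\phi(p)$, so the image curve $C'=\phi(C)$ is stable under $[-1]$ and its image $\overline{C'}$ in $Kum(JC)$ is a rational curve $\cong\mathbb{P}^1$ through $\overline{0}$. Using the description of $V_2$ as the preimage of the rational-equivalence orbit of the origin in the Kummer variety (Section 2), all points of $\overline{C'}$ are rationally equivalent to $\overline{0}$, whence $C'\subset V_2(JC)$; moreover $C'$ generates $JC$. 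By the iterated inclusion above, the $(k-1)$-fold sum $W:=\underbrace{C'+\dots+C'}_{k-1}$ is contained in $V_k(JC)$, and since $C'$ generates $JC$ the subvariety $W$ has dimension $\min(k-1,g)=k-1$ in the range $1\le k\le g+1$. Thus $\dim V_k(JC)\ge k-1$, and together with the general upper bound $\dim V_k(A)\le k-1$ this gives the equality.

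The routine algebra in the first part is not the real issue; the two points needing care are the verification that $C'\subset V_2(JC)$ --- which I would anchor on the Kummer-variety characterization of $V_2$ recalled in Section 2, the rationality of $\overline{C'}$ being the geometric input --- and the dimension count for the iterated sum, where one uses that adding a generating curve raises the dimension by one until all of $JC$ is filled.
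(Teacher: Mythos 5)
Your proposal is correct, and its second half is essentially the paper's argument, but your proof of the sum inclusion takes a genuinely different route. The paper deduces $V_k(A)+V_l(A)\subset V_{k+l-1}(A)$ (Corollary \ref{cor1}) from Beauville's decomposition of $CH^g(A)_{\mathbb Q}$ via Proposition \ref{V_k_vs_Chow}: for $x\in V_k(A)$, $y\in V_l(A)$ one writes $\{x\}=\{0_A\}+x_1+\ldots+x_{k-1}$ and $\{y\}=\{0_A\}+y_1+\ldots+y_{l-1}$, and since $CH^g_i(A)*CH^g_j(A)\subset CH^g_{i+j}(A)$ the product $\{x\}*\{y\}=\{x+y\}$ has graded components only in degrees $\le k+l-2$. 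Your argument --- $\{x+y\}-\{0_A\}=a+b+a*b$ lies in the ideal $(a,b)$ of the Pontryagin ring, and $(a,b)^{*(k+l-1)}=0$ because $i+j\ge k+l-1$ forces $i\ge k$ or $j\ge l$ --- is strictly more elementary: it uses only that $(CH_0(A)_{\mathbb Q},*)$ is a commutative ring with unit $\{0_A\}$ and $\{x\}*\{y\}=\{x+y\}$, plus the tacit (and trivial) fact that $a^{*k}=0$ implies $a^{*m}=0$ for $m\ge k$; it needs no Beauville grading and would even go through with integral coefficients. What the paper's route buys is the finer structural information it reuses nearby (e.g.\ $x\in V_k$ iff $x_k=\frac{1}{k!}x_1^{*k}=0$, the exponential formula, and Corollaries \ref{points_V_2} and the isogeny statement); what yours buys is brevity and independence from the decomposition. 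For the hyperelliptic Jacobian you do what the paper does: embed $C$ by a Weierstrass point so that $C\subset V_2(JC)$, iterate the inclusion to get $W^0_{k-1}(C)=C+\stackrel{(k-1)}{\ldots}+C\subset V_k(JC)$, and conclude with the upper bound of Theorem \ref{dim_V_k} a) (for $k=g+1$ the claim is anyway Bloch's theorem, $V_{g+1}(JC)=JC$). Your two minor variants are both sound: you verify $C\subset V_2(JC)$ through the Kummer characterization of Proposition \ref{properties_V_2} b) and the rational curve $C/\iota\cong\mathbb P^1$ through $\alpha(0)$, where the paper instead pushes forward the relation $p+\iota p\sim 2w_0$ on $C$ to get $\{x\}+\{-x\}=2\{0_{JC}\}$ and applies part a); and you count $\dim\bigl(C'+\dots+C'\bigr)=\min(k-1,g)$ by the standard ``a generating curve raises the dimension by one'' lemma, where the paper implicitly uses $\dim W^0_{k-1}(C)=k-1$ --- the two dimension counts agree in the whole range $1\le k\le g+1$.
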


The rest of the paper is devoted to the proof of the main theorem. The beginning follows closely the same strategy as in \cite{Isog_hyp} since we reduce to prove the vanishing of a certain adjoint form. The novelty here is that we prove this vanishing by using the action of a family of rationally trivial zero-cycles as in the spirit of  Mumford and Roitman results revisited by Bloch-Srinivas, Voisin and others.

 More precisely, we can assume that there is relative map $f:\cC \lra \cA$ of curves in abelian varieties over a base $\cU$ such that $f(C_y)\subset V_2(A_y)$ generates the abelian variety $A_y$ for all $y\in \cU$. We use the properties of the sets $V_2(A_y)$ to construct a cycle  on the family of abelian varieties which acts trivially on the differential forms. Then, by using deformation of differential forms as in \cite{Isog_hyp} we compute the so-called adjunction form in a generic point of the family. This technique can be traced-back to \cite{collino_pirola}, where  this procedure is introduced for the first time. In section $4$ we assume that, by contradiction, the dimension of the family is $\ge 2g$ and we prove that this implies the existence of a non-trivial adjoint form. This contradicts the results on section 3.

\textbf{Acknowledgments:} We warmly thank Olivier Martin for his careful reading of the paper, and the referee for valuable suggestions that have simplified and clarified the paper.

\section{Preliminaires on the subsets $V_k$ of an abelian variety $A$}

\subsection{On the dimension of $V_k$}

The most part of this subsection comes from \cite{Vo}, where the sets $V_k(A)$ appear for the first time.

\begin{defn}
 Let $A$ be an abelian variety and denote by $CH_0(A)$ the Chow group of zero-cycles in $A$ with rational coefficients. We also denote by $*$ the Pontryagin product in the Chow group. Given a point $x \in A$, we put $\{x\}$ for the class of $x$ in $CH_0(A)$. Then we define the Voisin sets (cf. \cite{Vo}):
 \[
  V_k=V_k(A):=\{x\in A \mid (\{x\}-\{0_A\})^{*k}=0\}.
 \]
\end{defn}

It is known that the set $V_k$ is a countable union of closed subvarieties of $A$. A typical way to obtain points in $V_k$  is given by the following Proposition:
\vskip 3mm
\begin{prop}(\cite[Prop. (1.9)]{Vo})
\label{Vo_Be}
Assume that $\{x_1\}+\dots +\{x_k\}=k\{0_A\}$ in $CH_0(A)$ for some points $x_i\in A$, then for all $i$ we have $x_i\in V_k$.
\end{prop}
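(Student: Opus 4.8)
The plan is to establish the equivalent assertion that $(\{x_i\}-\{0_A\})^{\ast k}=0$ for each $i$, and the first thing to record is that the hypothesis, read naively, is too weak: it is the single linear relation $\sum_{i=1}^{k}(\{x_i\}-\{0_A\})=0$ in the Pontryagin ring, whereas the conclusion for a fixed $x_i$ depends on $x_i$ alone. The device that closes this gap is the multiplicative (graded) structure of $CH_0(A)_{\mathbb Q}$ under $\ast$, so I would work throughout with the Beauville decomposition $CH_0(A)_{\mathbb Q}=\bigoplus_{s=0}^{g}CH_0(A)_{(s)}$, in which $\ast$ is graded ($CH_0(A)_{(s)}\ast CH_0(A)_{(t)}\subseteq CH_0(A)_{(s+t)}$), $CH_0(A)_{(s)}=0$ for $s>g$, and $CH_0(A)_{(0)}=\mathbb Q\{0_A\}$.

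For a point $x$ put $\ell_x:=\{x\}_{(1)}\in CH_0(A)_{(1)}$. The key input is the standard fact that $x\mapsto\{x\}$ is the Pontryagin exponential of its degree-one part, i.e. $\{x\}=\exp_\ast(\ell_x)=\sum_{s\ge 0}\ell_x^{\ast s}/s!$; equivalently $\ell_x=\log\{x\}$ is a homomorphism $A\to CH_0(A)_{(1)}$ and $\{x\}_{(s)}=\ell_x^{\ast s}/s!$. This is where the geometry of $A$ enters, the nilpotence $CH_0(A)_{(s)}=0$ for $s>g$ making $\exp_\ast$ and $\log$ honest finite operations. Taking the degree-$s$ component of the hypothesis $\sum_i\{x_i\}=k\{0_A\}$ then gives $\sum_i\{x_i\}_{(s)}=0$, whence $\sum_i\ell_{x_i}^{\ast s}=s!\sum_i\{x_i\}_{(s)}=0$ for every $s\ge 1$. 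In other words, all the power sums $p_s:=\sum_i\ell_{x_i}^{\ast s}$ of the elements $\ell_{x_1},\dots,\ell_{x_k}$ vanish.

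From here the argument is purely formal symmetric-function algebra inside the commutative $\mathbb Q$-algebra $(CH_0(A)_{\mathbb Q},\ast)$. Newton's identities convert $p_1=\dots=p_k=0$ into the vanishing of all elementary symmetric functions $e_1=\dots=e_k=0$ of $\ell_{x_1},\dots,\ell_{x_k}$. Fixing an index $i_0$, the product $\prod_{i=1}^{k}(\ell_{x_{i_0}}-\ell_{x_i})$ is zero because its $i=i_0$ factor vanishes; expanding it as $\sum_{j=0}^{k}(-1)^j e_j\,\ell_{x_{i_0}}^{\ast(k-j)}$ and using $e_1=\dots=e_k=0$ together with $e_0=1$ leaves $\ell_{x_{i_0}}^{\ast k}=0$. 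Finally I would transport this back to $F(x_{i_0}):=\{x_{i_0}\}-\{0_A\}$: since $F(x_{i_0})=\exp_\ast(\ell_{x_{i_0}})-\{0_A\}=\ell_{x_{i_0}}\ast u$ with $u$ a unit of constant term $\{0_A\}$, one gets $F(x_{i_0})^{\ast k}=\ell_{x_{i_0}}^{\ast k}\ast u^{\ast k}=0$, i.e. $x_{i_0}\in V_k$; as $i_0$ is arbitrary, this proves the statement.

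I expect the only genuine obstacle to be this very first step: recognizing that the lone linear relation must be promoted, through the grading, to the vanishing of \emph{every} power sum $p_s$. Once the identity $\{x\}=\exp_\ast(\ell_x)$ is in place the promotion is automatic and the rest is a routine Newton-identities computation. Conversely, without the graded structure the conclusion simply does not follow from the abstract relation $\sum_i(\{x_i\}-\{0_A\})=0$, so this is precisely the point where the geometry of the abelian variety is used.
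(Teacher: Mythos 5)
Your proof is correct. A caveat on the comparison: the paper does not prove this proposition at all --- it is quoted from \cite{Vo} (Prop.\ 1.9) --- so the relevant benchmark is the machinery the paper develops immediately afterwards in Section 2.2, and your argument is exactly in that spirit. The two ingredients you use, the Beauville decomposition $CH^g(A)_{\mathbb Q}=\bigoplus_s CH^g_s(A)$ and the identity $\{x\}=\exp_\ast(x_1)$, are precisely Proposition \ref{V_k_vs_Chow} and its Corollary (whose proofs are independent of the present statement, so there is no circularity), and the paper's Corollary \ref{points_V_2} is literally the two-point case of your argument: decompose $n\{a\}+m\{b\}=(n+m)\{0_A\}$ and read off the degree-one and degree-two components. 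Your $k$-point version is the natural generalization and matches Voisin's original proof: projecting the hypothesis onto each graded piece gives the vanishing of all Pontryagin power sums $p_s=\sum_i \ell_{x_i}^{\ast s}$; Newton's identities (legitimate here, since $(CH_0(A)_{\mathbb Q},\ast)$ is a commutative unital $\mathbb Q$-algebra with unit $\{0_A\}$ and characteristic zero allows division by $s$) force $e_1=\cdots=e_k=0$; and substituting $T=\ell_{x_{i_0}}$ into $\prod_i(T-\ell_{x_i})=\sum_j(-1)^j e_j\,T^{\ast(k-j)}$ yields $\ell_{x_{i_0}}^{\ast k}=0$, whence $x_{i_0}\in V_k$ either by Proposition \ref{V_k_vs_Chow} or by your factorization $\{x\}-\{0_A\}=\ell_x\ast u$. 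One small correction of framing: the hypothesis is not ``a single linear relation'' that must be promoted --- it is an equality in $CH_0(A)_{\mathbb Q}$, which automatically carries one relation in each Beauville degree, so your ``promotion'' is simply projection onto the graded pieces; everything after that is, as you say, routine symmetric-function algebra.
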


Since  the orbits w.r.t. rational equivalence  $\vert k \{0_A\}\vert$ are hard to compute there are only a few examples of positive dimensional components in $V_k(A)$ that we can construct from this proposition. The simplest instance of this comes from a $k$-gonal curve $C$ contained in $A$ such that there is a totally ramified point $p$ for the degree $k$ map $f:C\lra \mathbb  P^1$. Translating we can assume that $p$ is the origin $0_A$ and then the fibers of $f$ provide a $1$-dimensional component in the symmetric product, $k$ times, of $A$. Therefore, by the proposition above, we obtain that $C$ is contained in $V_k(A)$.
Observe that the sets $V_k$ are invariant under isogenies, hence  these positive dimensional components also appear in many other abelian varieties. In particular for any integer $n$, we have that $n_* V_2(A)\subset V_2(A)$. 

\begin{rem}\label{remarks_on_V_k}
We have the following properties:
\begin{enumerate}
 \item [a)] All the abelian varieties containing hyperelliptic cur\-ves have positive dimensional com\-po\-nents in $V_2$.
 \item [b)] A very well known theorem of Bloch (see \cite{bloch}) implies that $$V_{g+1}(A)=A,$$ hence the natural filtration
 \[
  V_1(A)  \subset V_2(A)\subset \ldots \subset V_g(A) \subset V_{g+1}(A)=A
 \]
 has at most $g$ steps.
It is natural to ask what is the behaviour of the dimension of $V_k(A)$, with $k\le g$, for very general abelian varieties, and which geometric properties of $A$ codify these sets. 
 \item [c)] Assume that $B\subset A$ is an abelian subvariety, then $V_k(B)\subset V_k(A)$. In particular, if $k\ge \dim B+1$, then $B\subset V_k(A)$. For instance: all the elliptic curves in $A$ passing through the origin are contained in $V_2(A)$. 
 \item [d)] Let $C$ be a smooth quartic plane curve and let $p$ be a flex point with tangent $t$, then $t\cdot C=3p+q$ and the projection from $q$ provides a collection of zero-cycles of degree $3$ in $JC$ rationally equivalent to $3\{0_{JC}\}$, then $C\subset V_3(JC)$. Using isogenies we get that there are in fact a countably number of curves in $V_3(A)$ for a very general abelian variety of dimension $3$.  
\end{enumerate}
\end{rem}

The following is proved in Theorem (0.8) of \cite{Vo} by using some ideas from \cite{Mumford} and improving the techniques of \cite{Kummer}:
\vskip 3mm
\begin{thm}\label{dim_V_k}  Let $A$ be an abelian variety of dimension $g$. Then:
\begin{enumerate} 
 \item [a)] $\dim V_k(A) \le k-1$.
 \item [b)] If $A$ is very general and $g\ge 2k-1$ we have that $\dim V_k(A)=0$.
\end{enumerate}
\end{thm}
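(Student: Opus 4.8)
The plan is to translate the defining condition of $V_k(A)$ into a statement about families of rationally trivial zero-cycles, and then to apply Mumford's principle that such families pull back holomorphic forms to zero. Expanding the Pontryagin power, using that $\{0_A\}$ is the unit for $*$ and that $\{x\}^{*j}=\{jx\}$, one gets
\[
(\{x\}-\{0_A\})^{*k}=\sum_{j=0}^{k}(-1)^{k-j}\binom{k}{j}\{jx\}\quad\text{in }CH_0(A)_{\mathbb Q}.
\]
Thus $x$ lies in $V_k(A)$ precisely when this $\mathbb Z$-linear combination of the points $jx=[j](x)$, $j=0,\dots,k$, is rationally trivial.

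For part (a) I would take an irreducible subvariety $Z\subset V_k(A)$ of dimension $d$, write $\iota\colon Z\hookrightarrow A$ for the inclusion and set $\phi_j:=[j]\circ\iota\colon Z\to A$. By construction the family of zero-cycles $x\mapsto\sum_j(-1)^{k-j}\binom{k}{j}\{\phi_j(x)\}$ is identically zero in $CH_0(A)_{\mathbb Q}$; since rational equivalence is generated by $\mathbb P^1$-families, on which positive-degree holomorphic forms have no trace, Mumford's lemma (\cite{Mumford}) yields $\sum_j(-1)^{k-j}\binom{k}{j}\phi_j^*\omega=0$ for every $\omega\in H^0(A,\Omega^p_A)$ with $p\ge1$. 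As each such $\omega$ is translation invariant and $[j]$ has differential $j\cdot\mathrm{id}$ on $T_0A$, we have $\phi_j^*\omega=j^{p}\,\omega|_Z$, so the relation becomes
\[
\Big(\sum_{j=0}^{k}(-1)^{k-j}\binom{k}{j}\,j^{p}\Big)\,\omega|_Z=0 .
\]
The scalar equals $k!\,S(p,k)$, a Stirling number of the second kind up to $k!$, which vanishes for $p<k$ and is nonzero for $p\ge k$. Hence $\omega|_Z=0$ for every $p$-form with $p\ge k$. Were $d\ge k$, taking $p=d$ and a smooth point $z\in Z$ one finds an invariant $d$-form restricting nontrivially to $T_zZ$, hence to $Z$, a contradiction; therefore $d\le k-1$.

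For part (b) this first-order argument becomes vacuous once $d\le k-1<k$, so genericity must enter essentially. I would use that $\mathcal V_{g,k}$ is a countable union $\bigcup_n W_n$ of closed subvarieties of $\mathcal A_g^\delta$: the very general $A$ has $\dim V_k(A)=0$ exactly when no $W_n$ equals $\mathcal A_g^\delta$, and for this it suffices to exhibit a single abelian variety $A_0$ of dimension $g\ge 2k-1$ with $V_k(A_0)$ finite. The natural candidate is the Jacobian of a general curve of genus $g$: by the construction via totally ramified degree-$k$ maps recalled after Proposition \ref{Vo_Be}, a positive-dimensional component of $V_k$ is tied to the presence of a curve of gonality at most $k$, whereas a general curve of genus $g\ge 2k-1$ has gonality $\lfloor (g+3)/2\rfloor\ge k+1$.

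The main obstacle is precisely this last point: one must rule out \emph{every} positive-dimensional component of $V_k(A_0)$, not merely the one produced by a distinguished curve, and convert the gonality heuristic into a genuine finiteness statement. This is the step demanding the refined Hodge-theoretic and infinitesimal estimates improving the techniques of \cite{Kummer}: one analyses the infinitesimal variation of the rational-equivalence condition along a generic deformation direction $\xi\in H^1(A,T_A)$ and shows that, when $g\ge 2k-1$, the Hodge structure has enough room for this vanishing to be obstructed, so that no closed component of $\mathcal V_{g,k}$ can fill all of $\mathcal A_g^\delta$.
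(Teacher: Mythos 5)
First, a point of context: the paper does not prove this theorem at all --- it is quoted from Theorem (0.8) of \cite{Vo}, so the comparison is with Voisin's argument. Your part (a) is correct and is essentially her proof: the expansion $(\{x\}-\{0_A\})^{*k}=\sum_{j=0}^{k}(-1)^{k-j}\binom{k}{j}\{jx\}$, the application of the Mumford--Roitman principle (in the form of \cite{Mumford}, or \cite[Lemma 2.2]{voisin_AG}, which is exactly Proposition \ref{Bloch-Srinivas} of this paper) to the family of rationally trivial cycles parametrized by a component $Z\subset V_k(A)$, the identity $[j]^*\omega=j^p\omega$ for invariant $p$-forms, and the Stirling-number evaluation $\sum_{j=0}^k(-1)^{k-j}\binom{k}{j}j^p=k!\,S(p,k)$, nonzero precisely for $p\ge k$, together force $\omega|_Z=0$ for all invariant $p$-forms with $p\ge k$ and hence $\dim Z\le k-1$. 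The only points to make explicit are that the Mumford-type vanishing holds a priori on a dense Zariski open subset of (the smooth locus of) $Z$, which suffices because the restricted form is holomorphic, and that $S(p,k)>0$ for $p\ge k\ge 1$.

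Part (b), however, has a genuine gap. Your reduction is logically sound: since $\mathcal V_{g,k}$ is a countable union of closed subvarieties, it would indeed suffice to exhibit one $A_0$ of dimension $g\ge 2k-1$ with $V_k(A_0)$ finite. But you never produce such an $A_0$. The gonality bound for a general curve of genus $g\ge 2k-1$ only rules out the \emph{one} construction of positive-dimensional components recalled after Proposition \ref{Vo_Be} (a $k$-gonal curve with a totally ramified point); there is no converse saying that every positive-dimensional component of $V_k(JC)$ arises from such a curve, and indeed the logical flow in \cite{Vo} is the opposite: countability of $V_k$ for very general $A$ is what \emph{yields} gonality statements, so invoking gonality to prove finiteness is circular. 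Your closing paragraph --- that one should analyse the infinitesimal variation of the rational-equivalence condition and show the Hodge structure ``has enough room'' when $g\ge 2k-1$ --- names the missing argument without supplying it; this is precisely the hard content of Voisin's proof, a substantive degeneration and infinitesimal-invariant analysis refining the technique of \cite{Kummer} (curves on generic Kummer varieties), where the numerical hypothesis $g\ge 2k-1$ actually gets used. As written, part (b) is a correct reduction followed by a restatement of the problem, not a proof.
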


In the specific case of $V_2(A)$ we have the following properties.

\begin{prop}\label{properties_V_2} Let $A$ be an abelian variety and let $Kum(A)$ be its Kummer variety.
 \begin{enumerate}
  \item [a)] We have the equality $V_2(A)=\{x\in A \mid \{x\}+\{-x\}=2\{0_A\}\}.$
  \item [b)] Let   $\alpha : A\lra Kum(A)$ be the quotient map. Then 
  $$
  V_2(A)=\alpha^{-1}(\{y\in Kum(A) \mid \{y\} \sim_{rat} \{\alpha (0_A)\} \}).
  $$
   \end{enumerate}
\end{prop}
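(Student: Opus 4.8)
The plan is to reduce both statements to a short computation with the Pontryagin product, together with the standard behaviour of zero-cycles under the degree-two quotient $\alpha$.

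For part (a) I would begin by expanding the Pontryagin square directly from the rule $\{a\}*\{b\}=\{a+b\}$, which gives
\[
(\{x\}-\{0_A\})^{*2}=\{2x\}-2\{x\}+\{0_A\},
\]
so that $x\in V_2(A)$ if and only if $\{2x\}+\{0_A\}=2\{x\}$ in $CH_0(A)_{\mathbb Q}$. The stated condition $\{x\}+\{-x\}=2\{0_A\}$ is not literally this cycle, but it is its image under the translation $t_{-x}\colon a\mapsto a-x$. Since $t_{-x}$ is an isomorphism of $A$, the pushforward $(t_{-x})_*$ is a bijection on $CH_0(A)_{\mathbb Q}$, and it sends $\{2x\}\mapsto\{x\}$, $\{x\}\mapsto\{0_A\}$ and $\{0_A\}\mapsto\{-x\}$. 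Applying it to the relation above turns it into $\{x\}+\{-x\}=2\{0_A\}$, and conversely; this yields (a).

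For part (b) I would exploit that $\alpha\colon A\to Kum(A)=A/\langle\iota\rangle$ is finite of degree two, where $\iota(x)=-x$. With $\mathbb Q$-coefficients the operations $\alpha_*$ and $\alpha^*$ on $CH_0$ satisfy $\alpha_*\alpha^*=2\cdot\mathrm{id}$ and $\alpha^*\alpha_*=\mathrm{id}+\iota_*$; in particular $\alpha^*$ is injective. A point $x$ lies in the right-hand side precisely when $\alpha_*\{x\}=\alpha_*\{0_A\}$ in $CH_0(Kum(A))_{\mathbb Q}$. By injectivity of $\alpha^*$ this is equivalent to $\alpha^*\alpha_*\{x\}=\alpha^*\alpha_*\{0_A\}$, that is, to $\{x\}+\{-x\}=\{0_A\}+\{-0_A\}=2\{0_A\}$, which by part (a) is exactly the condition $x\in V_2(A)$. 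Equivalently one can argue symmetrically: pushing the relation of (a) forward by $\alpha_*$ and using $\alpha(x)=\alpha(-x)$ gives $2\{\alpha(x)\}=2\{\alpha(0_A)\}$, hence $\alpha(x)\sim_{rat}\alpha(0_A)$, while pulling back by $\alpha^*$ recovers (a).

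The computations themselves are routine; the only point requiring care is the input from intersection theory in part (b), namely that $\alpha_*$ and $\alpha^*$ are well defined on $CH_0(\cdot)_{\mathbb Q}$ and satisfy the displayed identities even though $Kum(A)$ is singular at the images of the $2$-torsion points when $g\ge 2$. I expect this to be the main (and only mild) obstacle: one must justify the pullback $\alpha^*$ on zero-cycles as the cycle-theoretic fibre, well defined up to rational equivalence because $\alpha$ is finite and the $\mathbb Q$-coefficients absorb the ramification multiplicities, and then check $\alpha^*\alpha_*=\mathrm{id}+\iota_*$, which simply encodes that the fibre of $\alpha$ over $\alpha(x)$ is $\{x\}+\{-x\}$ (with the ramification value $2\{x\}$ over $2$-torsion matching $\{x\}+\{\iota(x)\}$). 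Once these standard facts are in place, both equalities of sets follow at once from the equivalences above.
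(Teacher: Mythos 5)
Your part (a) is exactly the paper's proof: expand $(\{x\}-\{0_A\})^{*2}=\{2x\}-2\{x\}+\{0_A\}$ and translate by $-x$. For part (b) your skeleton is also the paper's (push forward by $\alpha_*$, pull back by $\alpha^*$, use that the fibre of $\alpha$ over $\alpha(x)$ is $\{x\}+\{-x\}$), but you handle the division by $2$ on the Kummer side differently, and this is the one substantive point of comparison. The paper derives $2\{\alpha(x)\}\sim_{rat}2\{\alpha(0_A)\}$ and then divides by $2$ by invoking Roitman's theorem: since $Alb(Kum(A))=0$ the Chow group of $Kum(A)$ has no torsion, so the \emph{integral} relation $\{\alpha(x)\}\sim_{rat}\{\alpha(0_A)\}$ follows. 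You instead stay in $\mathbb{Q}$-coefficients throughout and make the equivalence formal via $\alpha_*\alpha^*=2\,\mathrm{id}$ and $\alpha^*\alpha_*=\mathrm{id}+\iota_*$, which avoids Roitman entirely; this buys a cleaner, purely projection-formula argument, and your care in justifying $\alpha^*$ on the singular quotient (finite degree-two map, $\mathbb{Q}$-coefficients absorbing ramification at two-torsion) is correct and is in fact glossed over in the paper. The caveat: your argument proves the statement only when $\{y\}\sim_{rat}\{\alpha(0_A)\}$ is read in $CH_0(Kum(A))_{\mathbb{Q}}$. The paper's own proof, and its description of $V_2(A)$ as the preimage of the orbit of $\alpha(0_A)$ under rational equivalence, intends the integral orbit; for the inclusion of $V_2(A)$ into that preimage, your push-forward only shows that $\{\alpha(x)\}-\{\alpha(0_A)\}$ is torsion in the integral Chow group, and killing that torsion is precisely where Roitman's theorem is needed --- no manipulation with $\alpha_*$ and $\alpha^*$ can substitute for it, since dividing by $2$ is exactly the issue. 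So either state explicitly that you work with $\mathbb{Q}$-coefficients (consistent with the paper's blanket convention for $CH_0$), in which case your proof is complete, or add the Roitman step to recover the integral statement.
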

\begin{proof}
Part a) follows from the observation that
\[
 (\{x\}-\{0_A\})^{*2}=\{2x\}-2\{x\}+\{0_A\}=0
\]
is equivalent, translating with $-x$, to $\{x\}+\{-x\}=2\{0_A\}$.

To prove b) we first see that $x\in V_2(A)$ if and only if $\{ \alpha (x) \} \sim_{rat} \{\alpha (0_A)\}$. Indeed, assume that $\{x\}+\{-x\}=2\{0_A\}$, applying $\alpha$ we get that $2 \{\alpha(x)\}\sim_{rat} 2\{\alpha ( 0_A)\}$. Since $Alb({Kum(A)})=0$ the Chow group has no torsion (see \cite{Roitman_tors})  therefore $\{\alpha(x)\}\sim_{rat} \{\alpha (0_A)\}$. In the opposite direction, if $\{\alpha (x)\}\sim_{rat} \{\alpha (0_A)\}$ we apply $\alpha^*$ at the level of Chow groups and we obtain that $x\in V_2(A)$. Hence $V_2(A)$ is the pre-image by $\alpha $ of the points rationally equivalent to $\alpha (0_A)$.
\end{proof}

\subsection{Relation with the Chow ring}

In this part we collect some computations on $0$-cycles on abelian varieties which are more or less implicit in \cite{beauville_quelques}, \cite{beauville} and \cite{Vo}.

Let us recall first some facts on the Chow group (with rational coefficients) of an abelian variety $A$ of dimension $g$ which are proved in \cite{beauville}. Let us define the subgroups:
\[
 CH^g_s(A):=\{z\in CH^g(A)_{\mathbb Q} \mid k_*(z)=k^s z, \quad \forall k\in\mathbb Z \}.
\]
Then:
\[
 CH^g(A)_{\mathbb Q}=CH^g_0(A) \oplus CH^g_1(A) \oplus \ldots \oplus CH^g_g(A).
\]
Moreover $CH^g_0(A)=\mathbb Q \{0_A\}$ and $I=\bigoplus _{s\ge 1} CH^g_s(A)$ is the ideal, with respect to the Pontryagin product, of the zero-cycles of degree $0$. It is known that $I^{*\, r}=  \bigoplus _{s\ge r} CH^g_s(A)$ and that $I^{*\, 2}$ is the kernel of the albanese map tensored with $\mathbb Q$:
\[
 I \lra A_{\mathbb Q},
\]
sending a zero cycle $\sum n_i\{a_i\}$ to the sum $\sum n_i \, a_i$ in $A$. Another useful property is that $CH^g_s(A)*CH^g_t(A)=CH^g_{s+t}(A)$. 

We point out that the filtration $V_1(A)\subset V_2(A) \subset \ldots \subset A$ is, in some sense, induced by the filtration $\ldots \subset I^{* \, 2}\subset  I \subset  CH^g(A)_{\mathbb Q}$. Indeed, given a point $x\in A$ we use the notation:
\begin{equation}\label{decomposition}
 \{x\}=\{0_A\}+x_1+\ldots +x_g, \qquad x_i \in CH^g_i(A).
\end{equation}
Then we have:
\begin{prop}\label{V_k_vs_Chow} 
 For all $x\in A$, $x$ belongs to $V_k(A)$ if and only if $$x_{k}=\ldots =x_g=0.$$ In particular $x \in V_2(A)$ if and only if $\{x\}-\{0_A\}\in CH^g_1(A)$.
\end{prop}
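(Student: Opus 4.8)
The plan is to reduce the whole statement to the behaviour of the single degree-one component $u:=x_1\in CH^g_1(A)$. The crucial input is that $x\mapsto\{x\}$ is a homomorphism from $(A,+)$ to $(CH^g(A)_{\mathbb Q},*)$: since the Pontryagin product is the pushforward along the group law, one has $\{x\}*\{y\}=\{x+y\}$, with $\{0_A\}$ as unit. Combined with the two structural facts recalled earlier, namely that $*$ respects the Beauville grading ($CH^g_s(A)*CH^g_t(A)=CH^g_{s+t}(A)$) and that $n_*$ acts on $CH^g_i(A)$ by $n^i$, this will pin down all the $x_i$ as divided powers of $u$, after which the claim becomes a bookkeeping exercise in the grading.

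First I would establish the normal form $x_i=\tfrac{1}{i!}\,u^{*i}$ for all $i$. Writing $\{x\}=\{0_A\}+\sum_i x_i(x)$ and expanding $\{x\}*\{y\}=\{x+y\}$, comparison of the graded piece in degree $1$ gives $x_1(x+y)=x_1(x)+x_1(y)$, so $x\mapsto u=x_1$ is additive; moreover $n_*\{x\}=\{nx\}$ together with the eigenvalue property yields the homogeneity $x_i(nx)=n^i x_i(x)$ for every $i$. Proceeding by induction on $i$, the defect $\psi_i(x):=x_i(x)-\tfrac{1}{i!}u(x)^{*i}$ is, thanks to the induction hypothesis applied to the cross terms in the degree-$i$ comparison, additive in $x$, while it is simultaneously homogeneous of weight $i$; hence $(n^i-n)\psi_i(x)=0$ for all integers $n$, forcing $\psi_i\equiv 0$ for $i\ge 2$ since we work with $\mathbb Q$-coefficients. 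This says exactly that $\{x\}=\exp_*(u)$, a truncated Pontryagin exponential (finite because $CH^g_i(A)=0$ for $i>g$). This normal form is implicit in Beauville's work and could instead be cited.

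With $\xi:=\{x\}-\{0_A\}=\sum_{i=1}^{g}\tfrac{1}{i!}u^{*i}$ in hand, I would read off the equivalence from the grading. On one hand, every monomial $x_{i_1}*\cdots*x_{i_k}$ in $\xi^{*k}$ has total degree $\ge k$, with equality only when all $i_j=1$; hence the lowest graded component of $\xi^{*k}$ lies in degree $k$ and equals $u^{*k}=k!\,x_k$. Thus $\xi^{*k}=0$ forces $u^{*k}=0$, whence $u^{*s}=u^{*k}*u^{*(s-k)}=0$ and therefore $x_s=\tfrac{1}{s!}u^{*s}=0$ for every $s\ge k$. Conversely, if $x_k=\cdots=x_g=0$, i.e. $u^{*k}=0$, then $\xi=\sum_{i=1}^{k-1}\tfrac{1}{i!}u^{*i}$ and each term of $\xi^{*k}$ is a rational multiple of some $u^{*s}$ with $s\ge k$, hence vanishes, so $\xi^{*k}=0$. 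This proves $\xi^{*k}=0\iff x_k=\cdots=x_g=0$, and taking $k=2$ gives the final assertion that $x\in V_2(A)$ if and only if $\{x\}-\{0_A\}\in CH^g_1(A)$.

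The step I expect to be the main obstacle is the clean derivation of the divided-power identity $x_i=\tfrac{1}{i!}u^{*i}$: one must correctly combine the homomorphism property, the compatibility of $*$ with the grading, and the $n_*$-homogeneity, and carry the induction through the cross terms. Once that normal form is secured, the rest is pure grading bookkeeping. As an alternative to the inductive derivation one can expand $\xi^{*k}=\sum_{j=0}^{k}(-1)^{k-j}\binom{k}{j}\exp_*(ju)$ and note that its degree-$s$ component is a fixed nonzero scalar multiple of $u^{*s}$ (in particular equal to $u^{*k}$ for $s=k$); but the graded argument above reaches the same conclusion while avoiding the combinatorial constants.
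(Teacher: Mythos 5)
Your proof is correct, but it takes a genuinely different route from the paper's, essentially reversing its logical order. The paper proves the proposition directly: it expands $(\{x\}-\{0_A\})^{*k}=\sum_{i=0}^k(-1)^i\binom{k}{i}\{(k-i)x\}$, substitutes the Beauville decomposition of each $\{(k-i)x\}$ using the eigenvalue property $l_*x_s=l^s x_s$, and invokes the finite-difference identities $\sum_{i=0}^k(-1)^i\binom{k}{i}(k-i)^l=0$ for $l<k$ and $=k!$ for $l=k$ (cited from Voisin), obtaining $(\{x\}-\{0_A\})^{*k}=k!\,x_k+\cdots$ and concluding by descending through $l\ge k$; the divided-power identity $x_k=\frac{1}{k!}x_1^{*k}$ is then extracted \emph{afterwards}, in the corollary, by comparing this expansion with the naive one. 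You instead establish the exponential normal form $\{x\}=\exp_*(x_1)$ first, by a self-contained functional-equation induction: additivity of $x_1$ from $\{x\}*\{y\}=\{x+y\}$, weight-$i$ homogeneity from $n_*$, and the defect $\psi_i$ being simultaneously additive and homogeneous of weight $i$, hence annihilated by $n^i-n$ over $\mathbb{Q}$ — this induction does go through, since the cross terms in degree $i$ are controlled by the induction hypothesis in degrees $a,b<i$. After that the proposition is pure grading bookkeeping (lowest component of $\xi^{*k}$ is $u^{*k}$, and $u^{*s}=u^{*k}*u^{*(s-k)}$ kills everything above), with no combinatorial constants at all. Each approach buys something: the paper's is shorter granted the difference formulas, while yours isolates the structural fact $\{x\}=\exp_*(x_1)$ (essentially Beauville's, as the paper's remark on the map $\gamma$ acknowledges) and makes the descent in $l\ge k$ unnecessary; your closing alternative via $\xi^{*k}=\sum_j(-1)^{k-j}\binom{k}{j}\exp_*(ju)$ is, in effect, exactly the paper's computation, so you correctly identified the road not taken. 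The only step worth stating explicitly in a final write-up is the tacit equivalence $x_k=0\iff u^{*k}=0$ used in your converse (``i.e.\ $u^{*k}=0$''), which is immediate from the normal form you proved but should be flagged as such.
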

\begin{proof}
 We apply to (\ref{decomposition}) the multiplication by $l$ in $A$:
 \[
  \{l x\}=\{0_A\}+l x_1+\ldots + l^g x_g.
 \]
 Using this we have:
 \[
  \begin{aligned}
   (\{x\}-\{0_A\})^{* k}&= \sum_{i=0}^k (-1)^i\binom{k}{i}\{(k-i)x\}=\\
                        &= \sum_{i=0}^k (-1)^i\binom{k}{i}(\{0_A\}+(k-i)x_1+\ldots +(k-i)^g x_g)= \\
                        &= \sum_{i=0}^k (-1)^i\binom{k}{i} \{0_A\}+ \sum_{i=0}^k (-1)^i\binom{k}{i} (k-i)x_1+\ldots \\
                        &+ \sum_{i=0}^k (-1)^i\binom{k}{i}(k-i)^g x_g.
  \end{aligned}
 \]
 Now we use the following formulas (see the proof of Lemma 3.3 in \cite{Vo} or prove them by induction):
\[
 \begin{aligned}
  \sum_{i=0}^k (-1)^i\binom{k}{i}(k-i)^l=& 0 \qquad \text{ if }\, l<k \\
  \sum_{i=0}^k (-1)^i\binom{k}{i}(k-i)^k=& k!
 \end{aligned}
\]
Therefore we obtain that:
\[
  (\{x\}-\{0_A\})^{* k}=k! x_k+\ldots
\]
and similarly $ (\{x\}-\{0_A\})^{* l}=l! x_l+\ldots $ for any $l\ge k$. Hence $x\in V_k(A)$ if and only if $x\in V_l(A)$ for all $l\ge k$ if and only if $x_k=\ldots =x_g=0$.
\end{proof}

 We have several consequences of this Proposition and of its proof:
 
\begin{cor}
With the same notations we have that $x_k=\frac 1 {k!}x_1^k$. Hence $\{x\}=exp(x_1).$ Moreover $x\in V_k$ if and only if $x_k=x_1^k=0$. 
\end{cor}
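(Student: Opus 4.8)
The plan is to isolate the graded piece of $(\{x\}-\{0_A\})^{*k}$ lying in $CH^g_k(A)$ and to compute it in two independent ways. Writing $\{x\}-\{0_A\}=x_1+\dots+x_g$ with $x_i\in CH^g_i(A)$ and expanding the $k$-th Pontryagin power, every monomial has the form $x_{i_1}*\dots*x_{i_k}$ with each $i_j\ge 1$; by the multiplicativity $CH^g_s(A)*CH^g_t(A)=CH^g_{s+t}(A)$ recalled above, such a monomial lies in $CH^g_{i_1+\dots+i_k}(A)$. Its total grading is therefore at least $k$, with equality precisely when every $i_j=1$. Hence the $CH^g_k(A)$-component of $(\{x\}-\{0_A\})^{*k}$ is exactly $x_1^{*k}$, which is itself homogeneous of grading $k$.

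On the other hand, the computation already carried out in the proof of Proposition \ref{V_k_vs_Chow} gives $(\{x\}-\{0_A\})^{*k}=k!\,x_k+(\text{terms of grading}>k)$, so its $CH^g_k(A)$-component equals $k!\,x_k$. Comparing the two descriptions of the same graded piece yields $k!\,x_k=x_1^{*k}$, that is $x_k=\tfrac{1}{k!}x_1^{*k}$. To deduce the exponential formula I would then observe that $x_1^{*s}\in CH^g_s(A)$ vanishes for $s>g$, so the series $\sum_{s\ge 0}\tfrac{1}{s!}x_1^{*s}$ terminates; using that $\{0_A\}$ is the unit for $*$ (so $x_1^{*0}=\{0_A\}$) one gets $\{x\}=\{0_A\}+\sum_{s=1}^{g}x_s=\sum_{s=0}^{g}\tfrac{1}{s!}x_1^{*s}=\exp(x_1)$.

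For the last assertion, Proposition \ref{V_k_vs_Chow} says $x\in V_k$ iff $x_k=\dots=x_g=0$. Since $x_k=\tfrac{1}{k!}x_1^{*k}$, the condition $x_k=0$ is equivalent to $x_1^{*k}=0$, and the latter already forces $x_1^{*s}=x_1^{*(s-k)}*x_1^{*k}=0$, hence $x_s=0$, for all $s\ge k$; so the single equation $x_k=x_1^{*k}=0$ characterizes membership in $V_k$. The argument is essentially routine once Proposition \ref{V_k_vs_Chow} is in hand: the only point requiring care is the grading bookkeeping, namely checking that among all $k$-fold Pontryagin monomials only $x_1^{*k}$ contributes in degree $k$ and that the remainder in the proof of Proposition \ref{V_k_vs_Chow} genuinely sits in gradings strictly larger than $k$, so that matching the degree-$k$ components is legitimate.
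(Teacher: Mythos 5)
Your proof is correct and follows essentially the same route as the paper: comparing the $CH^g_k(A)$-graded component of $(\{x\}-\{0_A\})^{*k}$ computed two ways, via the binomial expansion from Proposition \ref{V_k_vs_Chow} (giving $k!\,x_k$) and via the multinomial expansion of $(x_1+\dots+x_g)^{*k}$ (giving $x_1^{*k}$). You additionally spell out the exponential identity and the step that $x_1^{*k}=0$ forces $x_s=0$ for all $s\ge k$, details the paper leaves implicit; these are accurate and harmless additions.
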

\begin{proof}
 We have seen along the proof of the Proposition that 
 \[
 (\{x\}-\{0_A\})^{* k}=k! x_k+\ldots \text{ higher degree terms}.
 \] 
 Computing directly we get 
 that 
\[
 (\{x\}-\{0_A\})^{* k}=(x_1+\ldots +x_k)^{*k}=x_1^k+\ldots \text{higher degree terms}.
 \]
 By comparing both formulas we obtain the equality.
 \end{proof}

 \begin{rem}
 Notice that our computations are more or less contained in \cite{beauville_quelques}. Indeed define as in section $4$ of loc. cit. the map
 \[
  \gamma: A \lra I, \qquad a \mapsto \{0\} -\{a\} + \frac 12  (\{0\} -\{a\})^{*2} +\frac 13  (\{0\} -\{a\})^{*3}+\ldots 
 \]
this is a morphims of groups. Then, with our notations, $\gamma (x)= -x_1$. In particular the image of $\gamma $ belongs to $CH^g_1(A)$.
 \end{rem}

 \begin{cor}\label{points_V_2}
  Let $a, b \in A$ be two points such that $$n \{a\} + m\{b\}=(n+m)\{0_A\}$$ for some integers $1\le n,m$. Then $a,b \in V_2(A)$. 
 \end{cor}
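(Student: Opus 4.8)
The plan is to read the hypothesis off in the Beauville decomposition $CH^g(A)_{\mathbb Q}=\bigoplus_{s\ge 0}CH^g_s(A)$ and then to combine the resulting linear relations with the exponential structure of the classes $\{a\}$, $\{b\}$ recorded in the Corollary to Proposition \ref{V_k_vs_Chow}. First I would expand both classes as in \eqref{decomposition}, writing $\{a\}=\{0_A\}+a_1+\dots+a_g$ and $\{b\}=\{0_A\}+b_1+\dots+b_g$ with $a_s,b_s\in CH^g_s(A)$. The hypothesis then reads
\[
 (n+m)\{0_A\}+\sum_{s\ge 1}(n\,a_s+m\,b_s)=(n+m)\{0_A\}.
\]
Since the sum is direct and $(n+m)\{0_A\}\in CH^g_0(A)$, comparing the components lying in $CH^g_s(A)$ for each $s\ge 1$ gives $n\,a_s+m\,b_s=0$, that is $b_s=-\lambda\,a_s$ with $\lambda=n/m$.

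Next I would feed in the multiplicative identity $x_s=\tfrac{1}{s!}x_1^{s}$ from the preceding Corollary. At level $s=1$ it yields $b_1=-\lambda\,a_1$, so that $b_2=\tfrac12 b_1^{2}=\tfrac12\lambda^2 a_1^{2}=\lambda^2 a_2$, using $a_1^2=2a_2$. Comparing this with the level $s=2$ relation $b_2=-\lambda\,a_2$ produces $\lambda(\lambda+1)a_2=0$. Because $n,m\ge 1$ we have $\lambda>0$, hence the scalar $\lambda(\lambda+1)$ is strictly positive and we conclude $a_2=0$, whence also $b_2=-\lambda\,a_2=0$. By Proposition \ref{V_k_vs_Chow} this already gives $a,b\in V_2(A)$; alternatively, $a_1^2=2a_2=0$ forces $a_s=\tfrac1{s!}a_1^s=0$ for all $s\ge 2$ by the Corollary, and symmetrically for $b$.

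The main obstacle is recognizing that the linear relations $n\,a_s+m\,b_s=0$ by themselves are not enough: they never detect the vanishing of a single graded piece, since one can always trade $a_s$ against $b_s$. The decisive step is to bring in the nonlinear identity $a_2=\tfrac12 a_1^2$, which couples the level-$1$ and level-$2$ equations and manufactures the scalar $\lambda(\lambda+1)$; it is precisely the positivity of $n$ and $m$ (hence of this scalar) that lets one divide it out and conclude $a_2=0$. This is the only place where the hypothesis $1\le n,m$ enters, and it is exactly what makes the statement work.
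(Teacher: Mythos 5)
Your proposal is correct and is essentially the paper's own proof: the paper likewise compares the degree-$1$ and degree-$2$ Beauville components, substitutes $b_1=-\frac{n}{m}a_1$ into $n\,a_1^2+m\,b_1^2=0$, and divides out the strictly positive scalar (your $\lambda(\lambda+1)$) to conclude $a_1^2=b_1^2=0$ via the exponential identity $x_s=\frac{1}{s!}x_1^s$. The only refinement to note is that the clean justification of the last step is the Corollary following Proposition \ref{V_k_vs_Chow} (namely $x\in V_2$ iff $x_1^2=0$), not that Proposition alone, which requires all pieces $x_2=\ldots=x_g=0$ --- precisely what your parenthetical alternative supplies.
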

\begin{proof}
Decomposing as before:
\[
 \{a\}=\{0\}+a_1+\frac 12 a_1^2+\ldots \qquad  \{b\}=\{0\}+b_1+\frac 12 b_1^2+\ldots
\]
the equality of the statement implies that $n a_1+m b_1=0=n a_1^2+m b_1^2$. Then $b_1=-\frac nm a_1$ and thus $n a_1^2+ \frac {n^2}{m^2} a_1^2=0$, so $a_1^2=b_1^2=0$ and $a,b \in V_2(A)$.  
\end{proof}

\begin{cor}
 Let $\varphi:A\lra B$ be an isogeny, then $\varphi^{-1}(V_k(B))\subset V_k(A)$. In particular $\varphi (V_k(A))=V_k(B)$ and $\varphi ^{-1}(V_k(B))=V_k(A)$.
\end{cor}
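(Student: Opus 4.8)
The plan is to reduce everything to the single observation that pushforward of zero-cycles is compatible with the Pontryagin product, and then to play the isogeny against a complementary isogeny. The nontrivial content of the statement is the inclusion $\varphi^{-1}(V_k(B))\subset V_k(A)$; once this is in hand the two ``in particular'' equalities are formal.

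First I would record that for \emph{any} homomorphism of abelian varieties $\varphi:A\lra B$ the pushforward $\varphi_*$ on $CH_0(\cdot)_{\mathbb Q}$ is a ring homomorphism for the Pontryagin product. This is immediate from the definition $\alpha*\beta=\mu_*(\alpha\times\beta)$, with $\mu$ the group law, together with the identity $\varphi\circ\mu_A=\mu_B\circ(\varphi\times\varphi)$ expressing that $\varphi$ is a homomorphism; it yields $\varphi_*(\alpha*\beta)=\varphi_*\alpha*\varphi_*\beta$. Since $\varphi_*\{x\}=\{\varphi(x)\}$ and $\varphi_*\{0_A\}=\{0_B\}$, applying $\varphi_*$ to $(\{x\}-\{0_A\})^{*k}$ produces $(\{\varphi(x)\}-\{0_B\})^{*k}$, so $x\in V_k(A)$ forces $\varphi(x)\in V_k(B)$; that is, $\varphi(V_k(A))\subset V_k(B)$ for every homomorphism $\varphi$. (Equivalently, one may note that $\varphi\circ k_A=k_B\circ\varphi$ gives $k_{B*}\varphi_*=\varphi_*k_{A*}$, so $\varphi_*$ preserves Beauville's grading $CH^g_s$, and then invoke Proposition \ref{V_k_vs_Chow}.)

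Second, to get the reverse inclusion I would use a complementary isogeny: choosing $n$ with $\ker\varphi\subset A[n]$, multiplication by $n$ on $A$ factors through $\varphi$, producing a homomorphism $\psi:B\lra A$ with $\psi\circ\varphi=n_A$. Applying the first step to $\psi$ gives $\psi(V_k(B))\subset V_k(A)$. I also need that $V_k(A)$ is stable under preimage by $n_A$; this follows from Proposition \ref{V_k_vs_Chow} together with $\{nx\}=\{0_A\}+nx_1+\cdots+n^gx_g$, which shows the degree-$i$ part of $\{nx\}$ is $n^i x_i$, so (using rational coefficients and $n\neq0$) one has $nx\in V_k(A)$ iff $x\in V_k(A)$. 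Now if $\varphi(x)\in V_k(B)$, then $n_A(x)=\psi(\varphi(x))\in V_k(A)$, hence $x\in V_k(A)$; this is precisely $\varphi^{-1}(V_k(B))\subset V_k(A)$.

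Finally the equalities follow by bookkeeping. Surjectivity of $\varphi$ gives $\varphi(\varphi^{-1}(V_k(B)))=V_k(B)$, so combining with $\varphi^{-1}(V_k(B))\subset V_k(A)$ and $\varphi(V_k(A))\subset V_k(B)$ yields $V_k(B)\subset\varphi(V_k(A))\subset V_k(B)$, i.e. $\varphi(V_k(A))=V_k(B)$; and the first step gives $V_k(A)\subset\varphi^{-1}(V_k(B))$, so $\varphi^{-1}(V_k(B))=V_k(A)$. The only genuinely delicate point is the first step, namely the compatibility of $\varphi_*$ with the Pontryagin product and the identification $\varphi_*\{x\}=\{\varphi(x)\}$; the remainder is the grading computation already available from Proposition \ref{V_k_vs_Chow} and the routine construction of the complementary isogeny.
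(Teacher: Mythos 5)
Your proof is correct, and it shares the paper's key device — the complementary isogeny $\psi$ with $\psi\circ\varphi=n_A$ — but you run the argument at a different level. The paper works directly with the graded pieces of the Beauville decomposition: from $\psi_*\varphi_*=n_*$ bijective on $CH^g_k(A)_{\mathbb Q}$ it deduces that $\varphi_*$ is \emph{injective} on $CH^g_k(A)$, decomposes $\{x\}=\{0_A\}+x_1+\cdots+x_g$, and concludes from $\varphi(x)\in V_k(B)$ that $\varphi_*(x_k)=0$, hence $x_k=0$ and $x\in V_k(A)$ by Proposition \ref{V_k_vs_Chow}. You instead work at the level of the sets $V_k$ themselves: you first establish the pushforward inclusion $\varphi(V_k(A))\subset V_k(B)$ for an arbitrary homomorphism via the Pontryagin-ring-homomorphism property of $\varphi_*$ — which is exactly the route the paper's own remark after the corollary sketches (``this corollary also follows from the definition of $V_k$ and the fact that $\varphi_*$ is \ldots\ compatible with the Pontryagin product'') — and then combine $\psi(V_k(B))\subset V_k(A)$ with the stability $n_A x\in V_k(A)\Leftrightarrow x\in V_k(A)$, which you correctly extract from $\{nx\}=\{0_A\}+nx_1+\cdots+n^g x_g$ and rational coefficients. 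The Chow-theoretic input (Proposition \ref{V_k_vs_Chow}) is thus the same in both arguments; only its placement differs. What your packaging buys is twofold: the functoriality step works for any homomorphism, not just isogenies, and it makes the ``in particular'' equalities genuinely explicit — the paper's displayed proof only establishes $\varphi^{-1}(V_k(B))\subset V_k(A)$ and leaves the bookkeeping with surjectivity of $\varphi$ implicit, whereas you carry it out. No gaps: the identities $\varphi_*\{x\}=\{\varphi(x)\}$, $(\varphi\times\varphi)_*(\alpha\times\beta)=\varphi_*\alpha\times\varphi_*\beta$, and the existence of $\psi$ with $\psi\circ\varphi=n_A$ once $\ker\varphi\subset A[n]$ are all standard, as you say.
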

\begin{proof}
Since we work with Chow groups with rational coefficients it is clear that  for an integer $n\neq 0$ the map $n_*:CH^g_k(A)\lra CH^g_k(A)$ is bijective. Let $\psi : B\lra A$ be an isogeny such that $\psi \circ \varphi =n$, we deduce that $\varphi _*: CH^g_k(A) \lra CH^g_k(B)$ is injective. Let $x\in \varphi^{-1}(V_k(B))$ and set $\{x\}=\{0_A\}+x_1+\ldots + x_g$. By hypothesis 
\[
\varphi (\{x\}) =\{0_B\} +\varphi _*(x_1)+\ldots +\varphi _*(x_g) \in V_k(B). 
\]
Hence $\varphi_* (x_k)=0$ and $x_k=0$. Therefore $x \in V_k(A)$ and we are done.
\end{proof}

\begin{rem} This corollary also follows from the definition of $V_k$ and the fact that $\varphi _*$ is an isomorphism modulo torsion on Chow groups compatible with the Pontryagin product.
 \end{rem}

Another interesting consequence of this characterization is the following property:
\begin{cor}\label{cor1}
 For any $0\le k,l \le g$ we have that $$V_k(A)+V_l(A)\subset V_{k+l-1}(A).$$
\end{cor}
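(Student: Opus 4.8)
The plan is to reduce everything to the description of $V_k(A)$ coming from the Beauville decomposition (\ref{decomposition}). By Proposition \ref{V_k_vs_Chow} together with the identity $x_k=\frac{1}{k!}x_1^{*k}$ from the corollary following it, a point $x\in A$ lies in $V_k(A)$ precisely when $x_1^{*k}=0$, where $x_1\in CH^g_1(A)$ is the degree-one component of $\{x\}$. So I would first record this clean reformulation: $x\in V_k(A)\iff x_1^{*k}=0$ (for $k\ge 1$; the cases $k=0$ or $l=0$ are vacuous since then the relevant $V$ is empty).

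The key step is to show that the degree-one component is additive, i.e. $(a+b)_1=a_1+b_1$ for all $a,b\in A$. I would deduce this from the map $\gamma$ of the preceding Remark: since $\gamma$ is a morphism of groups and $\gamma(x)=-x_1$, additivity of $\gamma$ gives $-(a+b)_1=-a_1-b_1$, hence $(a+b)_1=a_1+b_1$. Alternatively one can argue directly, using $\{x\}=\exp(x_1)$, the equality of points $\{a\}*\{b\}=\{a+b\}$, and commutativity of $*$, to get $\{a+b\}=\exp(a_1)*\exp(b_1)=\exp(a_1+b_1)$, and then read off the degree-one part.

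With these two facts the conclusion is a short computation. Let $a\in V_k(A)$ and $b\in V_l(A)$, so $a_1^{*k}=0$ and $b_1^{*l}=0$. By additivity $(a+b)_1=a_1+b_1$, and expanding with the (commutative, associative) binomial theorem,
\[
(a_1+b_1)^{*(k+l-1)}=\sum_{i=0}^{k+l-1}\binom{k+l-1}{i}\,a_1^{*i}*b_1^{*(k+l-1-i)}.
\]
In each summand either $i\ge k$, forcing $a_1^{*i}=a_1^{*(i-k)}*a_1^{*k}=0$, or $i\le k-1$, in which case $k+l-1-i\ge l$ and $b_1^{*(k+l-1-i)}=0$; so every term vanishes. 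Hence $(a+b)_1^{*(k+l-1)}=0$, and $a+b\in V_{k+l-1}(A)$ by the reformulation above.

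The only real content is the additivity of the map $x\mapsto x_1$; once that is in hand, the rest is the pigeonhole-type vanishing of the binomial expansion, which is routine. I expect the additivity to be the step requiring care, since it is what links the group law on $A$ to the Beauville grading on $CH^g(A)_{\mathbb Q}$, but it is already available through the homomorphism $\gamma$ recorded in the Remark.
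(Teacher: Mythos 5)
Your proof is correct and takes essentially the same route as the paper: both arguments rest on the Beauville decomposition, Proposition \ref{V_k_vs_Chow}, and the identity $\{x+y\}=\{x\}*\{y\}$. The paper simply multiplies the truncated expansions $\{x\}=\{0_A\}+x_1+\cdots+x_{k-1}$ and $\{y\}=\{0_A\}+y_1+\cdots+y_{l-1}$ and uses $CH^g_s(A)*CH^g_t(A)= CH^g_{s+t}(A)$ to see that no component of degree $\ge k+l-1$ survives, so your reformulation via $x_1^{*k}=0$, the additivity of $x\mapsto x_1$, and the binomial pigeonhole are a repackaging of the same degree count.
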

\begin{proof}
 Let $x\in V_k(A)$, $y\in V_l(A)$. Then $\{x\}=\{0_A\}+x_1+\ldots +x_{k-1}$ and $\{y\}=\{0_A\}+y_1+\ldots +y_{l-1}$. Since $x_i*y_j \in CH^g_{i+j}(A)$ we obtain 
 \[
 \{x+y\}=  \{x\}*\{y\}=\{0_A\}+(x_1+y_1)+ (x_2+x_1*y_1+y_2)+\ldots+ x_{k-1}*y_{l-1}.
 \]
Thus $x+y\in V_{k+l-1}(A)$.
\end{proof}

As an application we have:
\begin{cor}\label{cor2}
 Let $C$ be a hyperelliptic curve of genus $g$. Then 
 \[
\dim V_k(JC)=k-1
\]
for $1\le k\le g$, that is, the maximal possible dimension is attained.
\end{cor}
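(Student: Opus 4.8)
The plan is to get the upper bound for free and to spend all the effort on producing the lower bound by exhibiting an explicit $(k-1)$-dimensional subvariety sitting inside $V_k(JC)$. Indeed, the inequality $\dim V_k(JC)\le k-1$ is exactly Theorem \ref{dim_V_k}(a), so the only real content is to construct something of dimension $k-1$ contained in $V_k(JC)$.

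First I would fix the embedding of $C$ in its Jacobian using a Weierstrass point $w$ as base point, so that $w\mapsto 0_{JC}$. Since $w$ is a totally ramified point of the hyperelliptic $g^1_2$, the construction described just before Remark \ref{remarks_on_V_k} (see also Remark \ref{remarks_on_V_k}(a)) shows that $C\subset V_2(JC)$. Concretely, writing $\iota$ for the hyperelliptic involution, one has $p+\iota(p)\sim 2w$ on $C$ for every $p\in C$, so if $x$ denotes the image of $p$ in $JC$ then the image of $\iota(p)$ is $-x$ and $\{x\}+\{-x\}=2\{0_{JC}\}$ in $CH_0(JC)$; Proposition \ref{properties_V_2}(a) then yields $x\in V_2(JC)$.

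Next I would iterate Corollary \ref{cor1}. Applying $V_m(JC)+V_2(JC)\subset V_{m+1}(JC)$ repeatedly gives
\[
\underbrace{V_2(JC)+\cdots +V_2(JC)}_{k-1}\subset V_k(JC),
\]
and hence, using $C\subset V_2(JC)$,
\[
W_{k-1}:=\underbrace{C+\cdots +C}_{k-1}\subset V_k(JC),
\]
where the sum is taken in the group $JC$. Because $C$ generates $JC$ and $k-1\le g-1$, the summation map $C^{(k-1)}\to JC$ is generically injective onto its image, so $\dim W_{k-1}=k-1$. Combining this with Theorem \ref{dim_V_k}(a) gives $\dim V_k(JC)=k-1$, as claimed.

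All the ingredients are classical, so there is no deep obstacle; the single point that requires genuine care is the dimension count in the last step, namely verifying that the $(k-1)$-fold sum $W_{k-1}$ really has dimension $k-1$ rather than collapsing. This is exactly where the hypothesis $k\le g$ enters, through $k-1\le g-1<g$ and the generic injectivity of the Abel--Jacobi map below the genus.
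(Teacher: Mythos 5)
Your proposal is correct and follows essentially the same route as the paper: embed $C$ in $JC$ via a Weierstrass base point so that $C\subset V_2(JC)$, iterate Corollary \ref{cor1} to get $W_{k-1}=C+\cdots+C\subset V_k(JC)$, and combine $\dim W_{k-1}=k-1$ with the upper bound of Theorem \ref{dim_V_k}(a). The paper's proof is just a terser version of yours (it leaves implicit both the hyperelliptic-involution computation and the standard fact that $\dim W^0_{k-1}(C)=k-1$ for $k-1\le g$), so your added details are fine rather than a deviation.
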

\begin{proof}
 Choosing a Weiestrass point to define the Abel-Jacobi map we can assume that the curve $C$ is contained in $V_2(JC)$, using inductively the previous Corollary, we have that 
 \[
 C+\stackrel{(k-1)}{\ldots} +C=W^0_{k-1}(C)\subset V_{k}(JC). 
 \]
 \end{proof}

For instance, for the Jacobian of a genus $3$ curve $C$  we have, in the hyperelliptic case, that $\dim V_2(JC)=1$ and $\dim V_3(JC)=2$. If instead  $C$ is a generic quartic plane curve we have that $\dim V_2(JC)=0$ by Theorem (\ref{dim_V_k}) and $\dim V_3(JC)\ge 1$ by  Remark (\ref{remarks_on_V_k},d). Denoting as in that remark $p$ a flex point and $q$ its residual point, we have also the following: for any $x\in C$ such that the tangent line to $C$ in $x$ goes through $q$, $x\in V_2(C)$ (we identify $C$ with the Abel-Jacobi image in $JC$ using $p$). Indeed: there exists a $y\in C$ with $2x+y+q\sim 3p+q$, hence in $JC$ there is a relation of the form $2\{x\}+\{y\}=3\{0\}$ and then Corollary (\ref{points_V_2}) implies that $\{x\}, \{y\} \in V_2(JC)$. 

Assume now that $C$ is a quartic plane with a hyperflex, that is a point $p$ such that $\mathcal O_C(1)\cong \mathcal O_C (4p)$. This condition defines a divisor in $\mathcal M_3$. Embed $C$ in its Jacobian using $p$ as base-point. Then for any bitangent $2x+2y$ we have $\{x\}, \{y\} \in V_2(JC)$. Also, with the same argument, for a standard flex $q$ we have $\{q\}\in V_2(JC)$. Everything suggests that 
the points in ``$C\cap V_2(JC)$'' could have some geometrial meaning. 
Notice that this leaves open the question whether  the dimension of $V_3(JC)$ is $1$ or $2$ for a generic quartic plane curve $C$.

\vskip 3mm
\section{A family of zero-cycles and the action on differential forms}

In this section we begin the proof of the main theorem.
 We proceed by contradiction, hence we assume that there exists an irreducible component $\mathcal Y$ of $\mathcal V_{g,2}$  of dimension $\ge 2g$. By \ref{dim_V_k} we have $\dim V_2(A)\le 1$, for any abelian variety $A$. Hence $V_2(A_y)$ contains curves for all $y\in \mathcal Y$ and,  by hypothesis,  at least one of these curves generates the abelian variety. By a standard argument (involving the properness and countability of relative Chow varieties and the existence of universal families of abelian varieties up to base change) we can assume the existence of the following diagram:
 \[
 \xymatrix@C=1.pc@R=1.8pc{
\mathcal {C}  \ar[rd] \ar[rr]^ f && \mathcal {A} \ar[ld]^{\pi } \\
&\mathcal U, 
}
\]
 where the parameter space $\cU$ comes equipped with a generically finite map $\Phi: \cU \lra \cY $ such that $\Phi(y)$ is the isomorphism class of $A_y$ for all $y\in \mathcal U$. Moreover $f_y:C_y \lra A_y$ is the normalization map of an irreducible curve $f_y(C_y)$ contained in $V_2(A_{y})$, and generating $A_y$, followed by the inclusion. We can also assume that $\cC \lra \cU$ has a section and then that $f$ induces a map of families of abelian varieties $F:\cJC \lra \mathcal A$ over $\mathcal U$.  

To start with we pull-back the families of curves and abelian varieties to $\cC$ itself: 
\[
 \xymatrix@C=1.pc@R=1.8pc{
\mathcal A_{\cC}  \ar[d]_{\pi_{\cC }} \ar[rr] && \mathcal {A} \ar[d]^\pi\\
\cC \ar[rr] && \cU.
}
\]

Now we define a family of zero-cycles in $\mathcal A_{\cC }$ parametrized by $\cC$. Let $s_+: \cC \lra \cA_{\cC}$ be the section given by the maps $(id_{\cC},f)$:

\[
 \xymatrix@C=1.pc@R=1.8pc{
\cC \ar@/_/[ddr]_{id_{\cC}} \ar@/^/[drr]^f 
   \ar[dr]^{s_+}  \\            
  & \cA_{\cC}  \ar[d]^{\pi_{\cC}} \ar[r]  & \cA \ar[d]^{\pi}       \\
  & \cC \ar[r]   & \cU             
  }
\]

Put $\mathcal Z^+:=s_+(\cC)$. Analogously, by considering  $-1_{\mathcal A}\circ f$, where  $-1_{\mathcal A}  $ is the relative $-1$ map on the family of abelian varieties 
we define a section $s_-:\cC \lra \cA_{\mathcal C}$ of $\pi_{\mathcal C}$ and a cycle $\mathcal Z^-:=s_-(\cC)$. Finally the zero section $0_{\mathcal A}$ induces a section $s_0$ and a cycle $\mathcal Z_0$. Set $\mathcal Z=\mathcal Z^++\mathcal Z^- - 2 \mathcal Z_0$, a cycle on $\mathcal A_{\cC}$.  In fact $\mathcal Z^++\mathcal Z^-, 2 \mathcal Z_0 \subset Sym^2 \mathcal A_{\cC}$.  
Observe that  $\mathcal Z_{t}$, $t\in \cC$ is the $0$-cycle 
\[
\mathcal Z_t=\{f(t)\}+\{-f(t)\}-2\{0_{A_y}\}
\]
on $A_y$, where $\pi(t)=y$. Since   $f(t)\in f(C_y)\subset V_2(A_y)$ we have that $\mathcal Z_{t}\sim _{rat}0$ in $A_y$ (see Proposition \ref{properties_V_2}).

We are interested in an infinitesimal deformation of a curve $C_y$ for a general $y\in \mathcal U$. Thus, let us denote  by $\Delta $ the spectrum of the ring of the dual numbers $Spec \, \mathbb C[\varepsilon ]/(\varepsilon ^2)$. We consider a tangent vector $\xi \in T_{\mathcal U}(y)$ and we take a smooth quasi projective curve $B\subset {\mathcal U}$ passing through $y$ and with  $\xi \in T_B(y)$. This induces the  maps
\[
\alpha_{\xi }:\Delta \lra B \lra  \mathcal U.                                                                                                                                                                                                                                                                                   \]
We pull-back to $B$ and to $\Delta $ the families of curves and abelian varieties and the cycle $\mathcal Z$, thus we have

\[
\xymatrix@C=1.pc@R=1.8pc{
\mathcal A_{\cC_{\Delta }}  \ar[d]_{\pi_{\Delta}} \ar[rr] && \mathcal A_{\cC_B} \ar[d]^\pi \ar[rr] && \mathcal A_{\cC} \ar[d]^{\pi_{\mathcal C}}\\
\mathcal \cC_{\Delta }  \ar[d] \ar[rr] && \mathcal C_B \ar[d] \ar[rr] && \mathcal C \ar[d] \\
\Delta \ar[rr]^{\alpha_{\xi}} && B \ar[rr]  && \cU,
}
\]
and   $\mathcal Z_{\Delta }$ (resp.  $\mathcal Z_{B }$) denotes the pull-back to $\mathcal A_{\mathcal C_{\Delta}}$ (resp. to
 $\mathcal A_{\mathcal C_{B}}$) of the cycle $\mathcal Z$.

The cycle $\mathcal Z_B$ determines a cohomological class  $[\mathcal Z_B]\in H^g(\mathcal A_{\cC_B},\Omega^g_{\mathcal A_{\cC_B}})$ and its restriction a class $[\mathcal Z_B]_t$ in
  $H^g(A_{t}, \Omega ^g_{\mathcal A_{\cC_B} \vert A_{ t} } )$, where $\pi(t)=y$.

 The following is well-known by the experts  and is a version of the classical results by Mumford and Roitman on zero-cycles (see \cite[Lemma 2.2]{voisin_AG} and also \cite{Bloch_Srinivas}): 
\begin{prop}\label{Bloch-Srinivas}
 If for any $t\in \cC_B$, the restricted cycle $\mathcal Z_t$ is rationally equivalent to $0$, then there is a dense Zariski open set $\mathcal V\subset \cC_B$ such that 
 $[\mathcal Z_{\mathcal V}]=0$ in $H^g(\mathcal A_{\mathcal V}, \Omega ^g_{\mathcal A_{\mathcal V}})$. In particular for all $t\in \mathcal V$ we have $[\mathcal Z_{\mathcal V}]_t=[\mathcal Z_{B}]_t =0$ in $H^g( A_{t}, \Omega ^g_{\mathcal A_{\mathcal V}\vert A_t})$.
\end{prop}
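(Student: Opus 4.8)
The plan is to run the classical Mumford--Roitman argument in the relative form due to Bloch--Srinivas and Voisin: fibrewise rational triviality forces rational triviality on the generic fibre, this spreads out to rational triviality of the total cycle over a dense open subset of the base, and the cycle class map into coherent cohomology kills rationally trivial cycles. Throughout we work with $\mathbb Q$-coefficients, as in the definition of the $V_k$, which will make a harmless finite base change invisible.

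First I would pass from the closed fibres of $\pi\colon \mathcal A_{\cC_B}\lra \cC_B$ to the generic fibre. By the theory of relative Chow varieties the locus $\{t\in \cC_B : \mathcal Z_t \sim_{rat} 0 \text{ in } A_t\}$ is a countable union $\bigcup_i S_i$ of closed subvarieties of $\cC_B$, where $S_i$ records the fibres admitting a rational equivalence of bounded discrete type (a $\mathbb P^1$ in a suitable relative symmetric product parametrising the equivalence, together with the auxiliary cycle realising it). The hypothesis says this union is all of $\cC_B$; since $\cC_B$ is irreducible and defined over the uncountable field $\mathbb C$, it cannot be a countable union of proper closed subsets, so some $S_i$ is dense and, being closed, equals $\cC_B$. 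The associated family of equivalences then provides, over the algebraic closure of $K=\mathbb C(\cC_B)$, a rational equivalence for the restriction $\mathcal Z_\eta$ of $\mathcal Z_B$ to the generic fibre $\mathcal A_\eta=\mathcal A_{\cC_B}\times_{\cC_B}\operatorname{Spec}K$; with $\mathbb Q$-coefficients a transfer by the finite cover descends it to $K$ itself, giving $\mathcal Z_\eta \sim_{rat} 0$ in $CH_0(\mathcal A_\eta)$.

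Next I would spread out. By continuity of Chow groups one has $CH_0(\mathcal A_\eta)=\varinjlim_{\mathcal V} CH^g(\mathcal A_{\mathcal V})$, the colimit over dense open subsets $\mathcal V\subset \cC_B$, and the restriction maps carry $\mathcal Z_{\mathcal V}$ to $\mathcal Z_\eta$. Since $\mathcal Z_\eta$ vanishes in the colimit, there is a dense open $\mathcal V\subset \cC_B$ for which already $\mathcal Z_{\mathcal V}=0$ in $CH^g(\mathcal A_{\mathcal V})$; that is, the cycle is rationally equivalent to zero on the total space $\mathcal A_{\mathcal V}$, not merely on each fibre. Finally, the cycle class map $CH^g(\mathcal A_{\mathcal V})\lra H^g(\mathcal A_{\mathcal V},\Omega^g_{\mathcal A_{\mathcal V}})$ into coherent cohomology factors through rational equivalence, whence $[\mathcal Z_{\mathcal V}]=0$. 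Restricting along the inclusion of a fibre $A_t\hookrightarrow \mathcal A_{\mathcal V}$, and using that $\mathcal Z_{\mathcal V}$ is the restriction of $\mathcal Z_B$, yields $[\mathcal Z_{\mathcal V}]_t=[\mathcal Z_B]_t=0$ in $H^g(A_t,\Omega^g_{\mathcal A_{\mathcal V}\vert A_t})$, as required.

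The delicate step is the first one: making rigorous the assertion that rational equivalences to zero are parametrised by countably many algebraic families over the base, so that the uncountability of $\mathbb C$ can be invoked. This is precisely the Bloch--Srinivas/Voisin input cited as \cite[Lemma 2.2]{voisin_AG}; once it is in place, the spreading-out and the vanishing of the coherent cycle class are formal consequences of the continuity of Chow groups and the fact that the cycle class map in Hodge cohomology is defined on $CH^g$.
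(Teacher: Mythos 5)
Your proof is correct and is precisely the standard argument behind the result the paper itself does not prove but only cites (it defers to \cite[Lemma 2.2]{voisin_AG} and \cite{Bloch_Srinivas}): the countability of the loci of fibrewise rational equivalences over the uncountable field $\mathbb C$, forcing triviality of $\mathcal Z_\eta$ on the generic fibre after a transfer from a finite extension (harmless with $\mathbb Q$-coefficients), then spreading out via $CH_0(\mathcal A_\eta)=\varinjlim_{\mathcal V} CH^g(\mathcal A_{\mathcal V})$ to get $\mathcal Z_{\mathcal V}=0$ on a dense open, and finally the vanishing of the coherent cycle class in $H^g(\mathcal A_{\mathcal V},\Omega^g_{\mathcal A_{\mathcal V}})$ on rationally trivial cycles, restricted to each fibre $A_t$. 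No gaps; this matches the intended (cited) proof in both structure and all key steps.
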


Now we look at the action of $\mathcal Z_{\Delta }$ in the space of  differential forms on the infinitesimal family of abelian varieties.  
This works as follows.  

Let $\mathcal A_{C_y}=A_y\times C_y$ be the restriction of  $\mathcal A_{C_{\Delta }}$ over $C_y$.
 Consider  $\Omega \in H^0(\mathcal A_{C_y}, \Omega^2_{\mathcal A_{\cC_ {\Delta}}|\mathcal A_{C_y}}) $ 
and define:
\begin{equation}\label{def_action}
 \mathcal Z_{\Delta }^*(\Omega )=(s_+)^*(\Omega)+(s_-)^*(\Omega )-2\, s_0^*(\Omega),
\end{equation}
which belongs to $H^0(  C_y, \Omega^2_{\mathcal C_{\Delta }|C_y}). $
Then we have by (\ref{Bloch-Srinivas}) that this action is trivial which gives the vanishing $ \mathcal Z_{\Delta }^*(\Omega )=0$.

Consider also the family of abelian varieties on $\Delta $:
\[
\xymatrix@C=1.pc@R=1.8pc{
\mathcal \cA_{\Delta }  \ar[d] \ar[rr] && \mathcal A \ar[d] \\
\Delta \ar[rr] && \cU.
}
\]

Notice that there is a natural map $\mathcal A_{\cC_{\Delta } }
\xrightarrow{h} \mathcal A_{\Delta}$ and that the composition:
\[
  \cC_{\Delta } \xrightarrow{s_+}  \mathcal A_{\cC_{\Delta } }
\xrightarrow{ h} \mathcal A_{\Delta}
\]
is simply the original family of curves $f_{\Delta}: \cC_{\Delta } \lra \mathcal A_{\Delta }$ over $\Delta $.
  Therefore 
for  any  form $\Omega' \in H^0(A_y,\Omega^2_{\mathcal A_{\Delta} \vert A_y}) $, denoting $\Omega = h^*(\Omega ')$, we have that 
\[
f_{\Delta}^*(\Omega')=s_+^*(\Omega).
 \]

An almost identical computation can be done with $s_-$ since $-1_{\mathcal A_{\cC_{\Delta}}}$ acts trivially on the $(2,0)$-forms. Finally if $\Omega' \in H^0(A_y,\Omega^2_{\mathcal A_{\Delta} \vert A_y})^0 \subset H^0(A_y,\Omega^2_{\mathcal A_{\Delta} \vert A_y})$, is a form vanishing at the origin, then 
$s^*_0(h^*(\Omega ') )=0$. 

These considerations combined with (\ref{def_action}) gives that for any form $\Omega '\in H^0(A_y , \Omega ^2 _{\cA_{\Delta }|A_y})^0$:
\[
\mathcal Z_{\Delta}^* h^*(\Omega')=2f_{\Delta}^*(\Omega').
\]
Then, using the vanishing of $\mathcal Z^*(\Omega)$, this implies:
\begin{prop} \label{vanishing_Delta}  With the above notations, 
\[
 f_{\Delta}^*:H^0(A_y,\Omega^2_{\mathcal A_{\Delta} \vert A_y})^0 \longrightarrow   H^0(C_y,\Omega^2_{\cC_\Delta \vert C_y})
 \cong H^0(C_y,\omega _{C_y})
\]
is the zero map.
 \end{prop}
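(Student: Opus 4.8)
The plan is to deduce the vanishing of $f_\Delta^*$ directly from the triviality of the action of the cycle $\mathcal Z_\Delta$ on holomorphic two-forms, which is exactly what the Mumford--Roitman type statement of Proposition \ref{Bloch-Srinivas} supplies. The whole argument consists in factoring the action $\mathcal Z_\Delta^*$ through the three sections $s_+$, $s_-$, $s_0$, identifying each summand geometrically, and then reading off the conclusion. Since the relevant computation has essentially been prepared in the text, the proof itself is short.

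First I would record that $\mathcal Z_\Delta^*(\Omega)=0$ for every $\Omega \in H^0(\mathcal A_{C_y},\Omega^2_{\mathcal A_{\cC_\Delta}|\mathcal A_{C_y}})$. Indeed, for each $t\in \cC_B$ the restricted cycle $\mathcal Z_t=\{f(t)\}+\{-f(t)\}-2\{0_{A_y}\}$ is rationally trivial, because $f(t)\in f(C_y)\subset V_2(A_y)$ and Proposition \ref{properties_V_2} identifies $V_2$ with the locus where this two--pointed cycle is rationally equivalent to $2\{0_{A_y}\}$. Hence Proposition \ref{Bloch-Srinivas} applies and forces the class $[\mathcal Z_B]_t$, and with it the induced action on holomorphic two--forms, to vanish. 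Next I would evaluate the defining expression (\ref{def_action}) on $\Omega=h^*(\Omega')$ for a form $\Omega'\in H^0(A_y,\Omega^2_{\mathcal A_\Delta|A_y})^0$ vanishing at the origin. The three terms are identified as follows: since $h\circ s_+=f_\Delta$ we get $s_+^*(h^*\Omega')=f_\Delta^*(\Omega')$; since $h\circ s_-=-1_{\mathcal A_\Delta}\circ f_\Delta$ and $(-1)^*$ fixes the holomorphic two--forms of an abelian variety, we get $s_-^*(h^*\Omega')=f_\Delta^*(\Omega')$ as well; and since $\Omega'$ vanishes at the origin while $h\circ s_0$ factors through the zero section, $s_0^*(h^*\Omega')=0$. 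Substituting these into (\ref{def_action}) yields
\[
0=\mathcal Z_\Delta^*(h^*\Omega')=2\,f_\Delta^*(\Omega'),
\]
so $f_\Delta^*(\Omega')=0$, which is the assertion.

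The only genuinely delicate step is the first one: the passage from the fiberwise rational triviality of $\mathcal Z_t$ to the vanishing of the action on forms. This is precisely the phenomenon packaged in Proposition \ref{Bloch-Srinivas}, and the subtlety is that the cohomological vanishing is only guaranteed on a dense Zariski open $\cV\subset \cC_B$. One must therefore take $y$ (equivalently the fiber $t$) general enough to lie over $\cV$; since we are free to choose $y$ very general in $\mathcal Y$ and $\xi\in T_{\mathcal U}(y)$ arbitrary, this costs nothing. Everything else is bookkeeping of the three sections together with the elementary fact that $(-1)^*$ is the identity on two--forms, so I expect no further obstacle in this proposition itself; the real work will be extracting a contradiction from it in the next section.
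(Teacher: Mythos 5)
Your proposal is correct and follows essentially the same route as the paper: fiberwise rational triviality of $\mathcal Z_t$ via Proposition (\ref{properties_V_2}), the Bloch--Srinivas/Mumford--Roitman vanishing of Proposition (\ref{Bloch-Srinivas}), and the evaluation of (\ref{def_action}) on $h^*(\Omega')$ using $h\circ s_+=f_\Delta$, the triviality of $(-1)^*$ on $(2,0)$-forms, and the vanishing of $\Omega'$ at the origin. Your explicit remark that one must place the fiber over the dense open set $\mathcal V$ (harmless since $y$ is very general) is a point the paper leaves implicit.
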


We will see in the next section that, if the dimension of the family is $\ge 2g$, then for a generic point of $\mathcal U$ and a convenient infinitesimal deformation  there is a form in $ H^0(A_y,\Omega^2_{\mathcal A_{\Delta } \vert A_y})^0$ with non-trivial image in $H^0(C_y,\Omega^2_{\cC_\Delta \vert C_y})$. This contradicts the Proposition.

\vskip 3mm
\section{The geometry of the adjoint form and end of the proof}

In this section we end the proof of the main Theorem. Assuming that the dimension of the family is $\ge 2g$, we will  find a contradiction with Proposition (\ref{vanishing_Delta}).  
 
As at the beginning of section $3$  we can assume the existence of the following diagram:
 \[
 \xymatrix@C=1.pc@R=1.8pc{
\mathcal {C}  \ar[rd] \ar[rr]^ f && \mathcal {A} \ar[ld]^{\pi } \\
&\mathcal U, 
}
\]
 where the parameter space $\cU$ comes equipped with a generically finite map $\Phi: \cU \lra \cY$. 
 We fix a generic point $y$ in $\cU$ and we denote by $\mathbb T$ the tangent space of $\cU$ at $y$. 
 Observe that $\mathbb T \hookrightarrow Sym^2 H^{1,0}(A_y)^\ast$. Moreover the surjective map $F_y$ induces an inclusion of $W_y:=H^{1,0}(A_y)$ in 
 $H^0(C_y,\omega_{C_y})$. Let $D$ be the base locus of the linear system generated by $W_y$, therefore $W_y\subset H^{0}(C_y,\omega_{C_y}(-D_y))$. 
 Lemma 3.1 in \cite{Isog_hyp} states that for a generic two dimensional subspace $E$ of $W_y$ the base locus of the pencil attached to $E$ is still 
 $D_y$. As in the proof of Theorem 1.4 \cite{Isog_hyp} we consider the map
  sending $\xi \in \mathbb T$, seen as a symmetric map  $\, \cdot \xi:\, W_y=H^{1,0}(A_y)\lra H^{1,0}(A_y)^*$, to its restriction to $E$. 
Let $E_0$ be a complementary of $E$ in $W_y$.   Then we have an element in $E^*\otimes E^* +E^*\otimes E_0^*$ which, by the symmetry, belongs to $Sym^2 E^* + E^* \otimes E_0^*$. 
This last space has dimension $3+2(g-2)=2g-1$. Since $\dim \mathcal Y \ge 2g$ we get that the linear map
\[
\mathbb T \lra Sym^2 E^* + E^* \otimes E_0^*
\]
sending $\xi$ to $\delta_{\xi \mid E} $ has non trivial Kernel. Therefore  we conclude the following:
  
  \begin{lem}\label{existence_xi} For any $2$-dimensional vector space $E \subset W_y$
  there exists $\xi \in \mathbb T$ killing all the forms in $E$. Hence, if $\omega_1,\omega_2$ is a basis of $E$, then $\xi \cdot \omega_1=\xi \cdot \omega _2=0$. 
  \end{lem}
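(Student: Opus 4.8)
The plan is a pure dimension count on the space of infinitesimal deformations, organized around the restriction of the symmetric pairing to $E$. Recall that each $\xi \in \mathbb{T}$ is encoded (via $\mathbb{T} \hookrightarrow Sym^2 W_y^*$) as a symmetric bilinear form on $W_y = H^{1,0}(A_y)$, equivalently as a symmetric linear map $W_y \to W_y^*$, $\omega \mapsto \xi \cdot \omega$. First I would fix the splitting $W_y = E \oplus E_0$ given by the choice of complement $E_0$, and consider the linear map
\[
\rho : \mathbb{T} \longrightarrow E^* \otimes W_y^*, \qquad \xi \longmapsto \big(\,\omega \mapsto \xi \cdot \omega\,\big)\big\vert_E,
\]
which records only the action of $\xi$ on the two-dimensional subspace $E$. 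Killing all forms in $E$ is by definition the same as lying in $\ker \rho$, so it suffices to show $\rho$ has nontrivial kernel.

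The decisive structural observation is that the image of $\rho$ does not fill up all of $E^* \otimes W_y^*$. Decomposing $W_y^* = E^* \oplus E_0^*$, the $E^* \otimes E^*$-component of $\rho(\xi)$ is exactly the restriction of the symmetric form $\xi$ to $E \times E$, which is again symmetric and hence lies in $Sym^2 E^*$; the remaining component lies in $E^* \otimes E_0^*$. Thus $\rho$ factors through
\[
Sym^2 E^* \oplus \big(E^* \otimes E_0^*\big), \qquad \dim = 3 + 2(g-2) = 2g-1.
\]

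To conclude I would invoke the contradiction hypothesis. Since $\Phi : \mathcal{U} \to \mathcal{Y}$ is generically finite and $y$ is generic, hence a smooth point of $\mathcal{U}$, we have $\dim \mathbb{T} = \dim \mathcal{U} = \dim \mathcal{Y} \ge 2g$. Comparing with the $(2g-1)$-dimensional target, rank--nullity forces $\ker \rho \neq 0$. Any nonzero $\xi$ in this kernel satisfies $\xi \cdot \omega = 0$ for every $\omega \in E$, and in particular $\xi \cdot \omega_1 = \xi \cdot \omega_2 = 0$ for a basis $\omega_1, \omega_2$ of $E$, which is precisely the assertion.

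The argument is routine linear algebra once the bookkeeping is fixed; the one genuinely substantive point is the symmetry reduction that drops the target from the a priori $2g$-dimensional space $E^* \otimes W_y^*$ down to the $(2g-1)$-dimensional space $Sym^2 E^* \oplus (E^* \otimes E_0^*)$. It is exactly this drop by one dimension, set against the strict inequality $\dim\mathcal{Y} \ge 2g$, that manufactures the nonzero kernel, so I would state the symmetry step carefully — the sharpness of the bound $2g-1$ in Theorem \ref{m.res} rests on it.
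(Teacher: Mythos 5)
Your proposal is correct and is essentially identical to the paper's proof: the paper likewise views $\xi \in \mathbb T \hookrightarrow Sym^2 W_y^*$ as a symmetric map $W_y \lra W_y^*$, restricts it to $E$, uses the symmetry to see that the image lies in $Sym^2 E^* + E^*\otimes E_0^*$ of dimension $3+2(g-2)=2g-1$, and deduces a nontrivial kernel from the contradiction hypothesis $\dim \mathcal Y \ge 2g$. The only cosmetic difference is that you make explicit the rank--nullity step and the identification of $\ker\rho$ with the kernel of the factored map, which the paper leaves implicit.
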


We want to compute the adjunction form for a basis  $\omega_1, \omega _2$ of $E$, as defined in \cite{collino_pirola}. Observe that $\xi$ can be seen as an infinitesimal deformation $\mathcal A_{\Delta }$ of $A_y$. We denote by $F_\xi $  the rank $2$ vector bundle on $A_y$ attached to $\xi$ via the isomorphism $H^ 1(A_y,T_{A_y})\cong Ext^ 1(\Omega^1_{A_y},\mathcal O_{A_y})$. 
With this notation the sheaf $F_\xi$ can be identified with $\Omega ^1_{\cA_{\Delta}\vert A_y}$.
By definition there is a short exact sequence of sheaves:
  \[
  0\lra \mathcal O_{A_y} \lra  \Omega ^1_{\cA_{\Delta}\vert A_y} \lra \Omega^1_{A_y} \lra 0.
  \]
The connection map $H^0(A_y,\Omega^1_{A_y})\lra H^1(A_y,\mathcal O_{A_y})$ is the cup-product with $\xi \in H^1(A_y,T_{A_y})$.
Then the forms $\omega_1, \omega_2$  lift to sections $s_1,s_2\in H^ 0(A_y,\Omega ^1_{\cA_{\Delta}\vert A_y})$. 
These sections are not unique, but they are by imposing them to be $0$ on the $0$-section of $\mathcal A_{\Delta }\lra \Delta$.

Then the adjoint form of $\omega_1, \omega_2$ with respect to $\xi$ is defined as the restriction of 
\[
s_1\wedge s_2 \in H^0(A_y, \Omega ^2_{\cA_{\Delta}\vert A_y} )^0
\]
to $C_y$. This is a section of $H^0(C_y,\Omega ^2 _{\cC_{\Delta}|C_y} )\cong H^0(C_y,\omega _{C_y})$. 
 
\vskip 3mm
\begin{prop}
 \label{vanishing_implies_end} If the adjoint form vanishes then $\xi$ belongs to the kernel of $d\Phi: \mathbb T \lra T_{\cA_g}(A_y)=Sym^2 H^{1,0}(A_y)^*$.
\end{prop}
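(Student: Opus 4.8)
The plan is to show that the hypothesis $s_1\wedge s_2|_{C_y}=0$ forces the Kodaira--Spencer image $d\Phi(\xi)$ to be the zero symmetric form. The first step is a reduction: recall that $d\Phi(\xi)\in Sym^2 W_y^\ast$ is exactly the symmetric map $\cdot\,\xi:W_y\lra W_y^\ast$, equivalently the connecting homomorphism $\partial_\xi=\cup\,\xi:H^0(A_y,\Omega^1_{A_y})\lra H^1(A_y,\mathcal O_{A_y})$ of the defining sequence $0\to\mathcal O_{A_y}\to\Omega^1_{\cA_\Delta\vert A_y}\to\Omega^1_{A_y}\to 0$. Thus $\xi\in\ker d\Phi$ is equivalent to $\partial_\xi\equiv 0$. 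By Lemma \ref{existence_xi} we already know $\partial_\xi\omega_1=\partial_\xi\omega_2=0$, i.e. $d\Phi(\xi)$ vanishes on $E\times W_y$; so the real task is to propagate this vanishing to the complementary directions $E_0$, using that the adjoint form is zero.

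Next I would make the adjoint form explicit. Since $\partial_\xi\omega_i=0$, each $\omega_i$ lifts to a global holomorphic section $s_i\in H^0(A_y,\Omega^1_{\cA_\Delta\vert A_y})$, normalised to vanish on the zero section, so that $s_1\wedge s_2\in H^0(A_y,\Omega^2_{\cA_\Delta\vert A_y})^0$. Restricting the sequence to $C_y$ and using that $\omega_1\wedge\omega_2$ dies in $\Omega^2_{C_y}=0$, the class $f_\Delta^\ast(s_1\wedge s_2)$ lands in $H^0(C_y,\omega_{C_y})$ and is the adjoint form $\lambda$; through the local $d\varepsilon$-components $g_i$ of the $s_i$ it records a Massey-type expression of the shape $g_1 f^\ast\omega_2-g_2 f^\ast\omega_1$. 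The crucial point is that the lifts $s_i$, and hence $\lambda$, depend on the entire extension $\Omega^1_{\cA_\Delta\vert A_y}$ and therefore on all of $\xi$, not merely on its restriction to $E$; this is what makes $\lambda=0$ a nontrivial constraint in the transverse directions.

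The heart of the argument is the Adjoint Theorem of \cite{collino_pirola}, in the form exploited in \cite{Isog_hyp}: for the pencil $E$ with base divisor $D$ inducing $\phi:C_y\lra\mathbb P^1$, the vanishing of $\lambda$ in $H^0(C_y,\omega_{C_y})$ is equivalent to a liftability/proportionality condition on $\xi$ along $C_y$, the precise cohomological form of which I would extract from that reference. Here I would invoke the standing hypothesis that $C_y$ generates $A_y$: this makes $f^\ast:W_y\hookrightarrow H^0(C_y,\omega_{C_y})$ injective with non-degenerate image, so that the pulled-back classes $\partial_\xi\eta$, for $\eta\in E_0$, cannot all be absorbed into the single fibration direction unless they vanish. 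Combining this with the known vanishing on $E$ gives $\partial_\xi\equiv 0$ on all of $W_y$, i.e. $d\Phi(\xi)=0$ and $\xi\in\ker d\Phi$, as required.

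I expect the main obstacle to be exactly this final upgrade: passing from the single scalar condition $\lambda=0$, which a priori only controls $\xi$ in the two directions spanned by $\omega_1,\omega_2$, to the full vanishing of $q_\xi=d\Phi(\xi)$ on the transverse space $E_0$. This is where the generation of $A_y$ by $C_y$ must enter essentially, through the injectivity and non-degeneracy of $f^\ast$ and of the relevant multiplication maps $W_y\otimes H^0(C_y,\omega_{C_y}(-D))\to H^0(C_y,\omega_{C_y}^{\otimes 2}(-D))$; the delicate bookkeeping will be keeping track of the base divisor $D$ and of the zero-section normalisation of the $s_i$ throughout the Collino--Pirola construction.
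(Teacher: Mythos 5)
Your proposal correctly identifies the key reference (the Adjoint Theorem of \cite{collino_pirola}) and correctly isolates the real difficulty: the adjoint form is a single section built only from lifts of $\omega_1,\omega_2\in E$, so one must explain how its vanishing controls $\xi$ in the $2g-3$ directions transverse to $E$. But at exactly that point the proposal has a genuine gap: you leave the conclusion of the Adjoint Theorem as a black box (``a liftability/proportionality condition \dots\ the precise cohomological form of which I would extract from that reference'') and then substitute a speculative mechanism --- non-degeneracy of $f^\ast$ and of multiplication maps $W_y\otimes H^0(C_y,\omega_{C_y}(-D))\to H^0(C_y,\omega_{C_y}^{\otimes 2}(-D))$ --- for which you give no argument and which is not, in fact, how the propagation works. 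Note that the classes $\partial_\xi\eta$ for $\eta\in E_0$ simply do not appear in the adjoint form, so no amount of non-degeneracy of multiplication maps acting on $\lambda=0$ will directly produce $\partial_\xi\eta=0$; the phrase ``cannot all be absorbed into the single fibration direction'' is not an argument.

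The actual mechanism, which is the whole content of the paper's proof, is that Theorem 1.1.8 of \cite{collino_pirola} converts the scalar condition $\lambda=0$ into the vanishing of the image of $\xi$ in $H^1(C_y,T_{C_y}(D))\cong Ext^1(\omega_{C_y}(-D),\mathcal O_{C_y})$, i.e.\ into the splitting of the extension
\[
0\lra \mathcal O_{C_y}\lra i^\ast\Omega^1_{\cC_\Delta\vert C_y}\lra \omega_{C_y}(-D)\lra 0 .
\]
Once the extension splits, the connecting homomorphism $H^0(C_y,\omega_{C_y}(-D))\to H^1(C_y,\mathcal O_{C_y})$ vanishes on the \emph{whole} space $H^0(C_y,\omega_{C_y}(-D))$, and since $W_y\subset H^0(C_y,\omega_{C_y}(-D))$ this kills $\xi\cdot W_y$ at once --- no multiplication maps are needed. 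The propagation from $E$ to all of $W_y$ is thus encoded in the twist by $D$ in the $Ext$ group, and the geometric input making this legitimate is the one you never use: $D$ is the base divisor of the \emph{full} system $W_y$, and by Lemma 3.1 of \cite{Isog_hyp} the generic pencil $E$ has base divisor exactly $D$ (were the pencil's base divisor strictly larger, the splitting would occur over a smaller subsheaf whose sections need not contain $W_y$, and the argument would collapse). Your appeal to the generation of $A_y$ is relevant only for the final, more routine step --- injectivity of $f^\ast$ on $H^1(\mathcal O)$, so that $\xi\cdot W_y=0$ computed on $C_y$ implies $d\Phi(\xi)=0$ on $A_y$ --- not for the propagation itself.
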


\begin{proof} 
 According to Theorem 1.1.8 in \cite{collino_pirola}, the adjoint form vanishes if and only if the image of $\xi $ in 
\[
H^1(C_y, T_{C_y}(D))\cong Ext^1(\omega_{C_y}(-D), \mathcal O_{C_y}) 
\]
 is zero. This says that the corresponding extension is trivial, so the short exact sequence in the first row of the next diagram splits (i.e. $i^* \Omega ^1_{\cC_{\Delta}\vert C_y}=\mathcal O_{C_y} \oplus \omega _{C_y}(-D)$):
 \[
  \xymatrix@C=1.pc@R=1.8pc{
 0 \ar[r] &
 \mathcal  O_{C_y} \ar[r] \ar@{=}[d]   &
i^* \Omega ^1_{\cC_{\Delta}\vert C_y} \ar[r] \ar[d]   &
 \omega _{C_y}(-D) \ar[r] \ar@{^{(}->}[d]^{i} &  0 \\
 0 \ar[r]&   O_{C_y} \ar[r]   &  \Omega ^1_{\cC_{\Delta}\vert C_y}  \ar[r]  & \omega _{C_y} \ar[r]  & 0
 }
 \]
which implies that the connecting homomorphism in the associated long exact sequence of cohomology $H^0(C_y,\omega _{C_y}(-D))\lra H^1(C_y,\mathcal O_{C_y})$ is trivial. Therefore $\xi \cdot H^0(C_y,\omega _{C_y}(-D))=0$ and in particular $\xi \cdot W_{y}=0$. Hence $\xi$ is in the kernel of $d\Phi_y$.  \end{proof}

\vskip 3mm
\textbf{End of the proof of \ref{m.res}:}
Since $d\Phi$ is injective in a generic point, we are reduced to prove the vanishing of the adjoint form to reach a contradiction. Our aim is to use the vanishing obtained in Corollary (\ref{vanishing_Delta}).

We fix a generic point $y\in \mathcal U$ and consider $(\xi, E)$ as in Lemma (\ref{existence_xi}).
As in the section 3, set $\Delta:=Spec \mathbb C[\varepsilon]/(\varepsilon ^2)$ and let $\alpha_{\xi } \lra \mathcal U $ be the map attached to $\xi $.
From now on we restrict our family over $\mathcal U$ to a family over $\Delta $.  Moreover we denote by $\cC_{\Delta }$ the pull-back  of $\mathcal C$ to $\Delta$, hence  we have an infinitesimal deformation of $C_y$:
 \[
 \cC_{\Delta } \lra \Delta.
 \]
Again we pull-back to $\Delta $ the family of abelian varieties and the family of curves  we get the diagram:
\begin{equation}\label{definition_Gamma}
  \xymatrix@C=1.pc@R=1.8pc{
 &\mathcal A_{\Delta } \ar[dd] \ar[rr] && \mathcal A \ar[dd]^{\pi} \\
  \cC_{\Delta}  \ar[ru]^{f_{\Delta }} \ar[rd] \ar[rr] && \mathcal {C} \ar[ru]^f \ar[rd]& \\
 &\Delta \ar[rr] &&\mathcal U. 
 }
 \end{equation}

Notice that $E$ is generated by two linearly independent forms $\omega_1, \omega_2 \in H^0(A_y,\Omega ^1_{A_y})\subset H^0(C_y,\omega_{C_y})$,  we still denote by $s_1,s_2\in H^ 0(A_y,\Omega ^1_{\cA_{\Delta}\vert A_y})$ the lifting of both sections. Then by the Proposition (\ref{vanishing_Delta}) the restriction to $C_{\Delta}$ of the form $s_1 \wedge s_2$  is zero. Hence the adjoint form is zero and thus the Theorem is proved. 
\qed

\vskip 3mm
\begin{rem}
 Since the Voisin set $V_2$ can be seen as the preimage of the rational orbit of the image of the origin in the corresponding Kummer variety our Theorem gives a bound on the dimension of the locus of Kummer varieties where these orbits are positive dimensional and ``non-degenerate'' (that is, the preimage in the abelian variety generates the abelian variety itself).
 \end{rem}

\end{document}